\providecommand{\U}[1]{\protect\rule{.1in}{.1in}}
\theoremstyle{break}
\newtheorem{theorem}{Theorem}[section]
\newtheorem{condition}[theorem]{Condition}
\newtheorem{lemma}[theorem]{Lemma}
\newtheorem{proposition}[theorem]{Proposition}
\newtheorem{remark}[theorem]{Remark}
\newenvironment{proof}[1][Proof]{\textbf{#1.} }{\ \rule{0.5em}{0.5em}}
\numberwithin{equation}{section}
\begin{document}

\title{A local CLT for convolution equations with an application to weakly
self-avoiding random walks\thanks{AMS 2000 classification: 60K35, 60F05. Key
words and phrases: central limit theorem, convolution equations, self-avoiding
random walks.}}
\author{Luca Avena\thanks{supported by the Swiss National Science Foundation under
contract 138141, and by the Forschungskredit of the University of Z\"{u}rich.}
\and Erwin Bolthausen\thanks{supported by the Swiss National Science Foundation
under contract 138141, and by the Humboldt Foundation.}
\and Christine Ritzmann
\and University of Z\"{u}rich}
\date{}
\maketitle

\begin{abstract}
We prove error bounds in a central limit theorem for solutions of certain
convolution equations. The main motivation for investigating these equations
stems from applications to lace expansions, in particular to weakly
self-avoiding random walks in high dimensions. As an application we treat such
self-avoiding walks in continuous space. The bounds obtained are sharper than
the ones obtained by other methods.

\end{abstract}

\section{Introduction}

\subsection{On some convolution equations}

Let $\phi$ be the standard normal density in $\mathbb{R}^{d},$ $\mathbf{B}%
=\left\{  B_{k}\right\}  _{k\geq1}$ be a sequence of rotationally invariant
integrable functions, and $\lambda>0$ a (small) parameter. Define recursively%
\begin{align}
C_{0}  &  =\delta_{0},\nonumber\\
C_{n}  &  =C_{n-1}\ast\phi+\lambda\sum_{k=1}^{n}c_{k}B_{k}\ast C_{n-k}%
,\ n\geq1, \label{Basic_ConvolEqu}%
\end{align}
where%
\[
c_{n}\overset{\mathrm{def}}{=}\int C_{n}\left(  x\right)  dx.
\]
$\delta_{0}$ denotes the Dirac \textquotedblleft function\textquotedblright.

As written above, the sequence $\mathbf{C}=\left\{  C_{n}\right\}  _{n\geq0}$
is not quite recursively defined as the right hand side in
(\ref{Basic_ConvolEqu}) contains the summand $c_{n}B_{n}.$ The sequence
$\left\{  c_{n}\right\}  $ itself satisfies%
\begin{align}
c_{0}  &  =1,\label{Equ_sequ_c}\\
c_{n}  &  =c_{n-1}+\lambda\sum_{k=1}^{n}c_{k}b_{k}c_{n-k},\ n\geq1,\nonumber
\end{align}
where $b_{k}=\int B_{k}\left(  x\right)  dx.$ Therefore, if $\lambda\left\vert
b_{n}\right\vert <1$ for all $n,$ these equations define the sequence
$\left\{  c_{n}\right\}  $ uniquely, and then also $\mathbf{C}$ is well
defined. We will always assume that we are in this situation.

The main assumption is a decay property of the $B_{n}$ for large $n.$ We will
also assume Gaussian decay properties in space which are natural for the
applications to self-avoiding walks we have in mind. The method we present
here can probably be adapted to treat situations with less severe decay
assumptions in space, but we have not worked that out.

Our main interest is to prove a local central limit theorem for the signed
density $C_{n}/c_{n}$ under appropriate conditions on $\mathbf{B}$ and
$\lambda.$ Of course, the parameter $\lambda$ can be incorporated into
$\mathbf{B}$. However, the approach we follow is purely perturbative. We will
give conditions on $\mathbf{B},$ and then state that if in addition $\lambda$
is small enough a CLT holds.

At the expense of a few complications, we could also investigate the case
where the first summand in \eqref{Basic_ConvolEqu} is $C_{n-1}\ast S$ with a
rotationally invariant density $S.$ We however feel that this generalization
would somehow obscure the main line of the argument. To step out from the
rotationally invariant case leads however to new, complicated, and interesting
problems which will be presented elsewhere.

The main motivation for our investigation comes from Weakly Self-Avoiding
Random Walks (WSAW). Indeed, as we will show, by using the so called
\emph{lace expansion}, WSAW satisfy an equation as in \eqref{Basic_ConvolEqu}.

In the next setion we state our main theorem on this type of convolution
equation, Theorem \ref{Th_main}. In Section \ref{DefContWSAW}, we introduce
WSAW in continuous space and state a local CLT, Theorem \ref{Th_main_SAW},
that will be deduced from Theorem \ref{Th_main}. To conclude this introductory
part, in Subsecton \ref{Literature} we discuss how this work relates to the
existent literature and we describe the structure of the paper.

\subsection{Main result on convolution equations}

\label{resultOnConvEqu} Before stating our general result on convolution
equations as in \eqref{Basic_ConvolEqu}, we first fix some notations and
define the set of conditions we need for the $B_{k}$'s in \eqref{Basic_ConvolEqu}.

$\mathbb{N}$ is the set of natural numbers $\left\{  1,2,\ldots,\right\}  $
and $\mathbb{N}_{0}\overset{\mathrm{def}}{=}\mathbb{N\cup}\left\{  0\right\}
.$ For $t>0,$ $\phi_{t}$ is the centered normal density in $\mathbb{R}^{d}$
with covariance matrix $t\times\mathrm{identity}.$ We write $\phi$ for
$\phi_{1}.$

We write $\mathcal{C}_{\ast}\left(  \mathbb{R}^{d}\right)  $ for the set of
continuous, integrable functions $f:\mathbb{R}^{d}\rightarrow\mathbb{R},$
vanishing at $\infty,$ which are of the form $f\left(  x\right)  =f_{0}\left(
\left\vert x\right\vert \right)  $ for some continuous function $f_{0}%
:[0,\infty)\rightarrow\mathbb{R}.$ We also write $\mathcal{C}_{\ast}%
^{+}\left(  \mathbb{R}^{d}\right)  $ for the strictly positive ones.

Here are the conditions we need for $\mathbf{B}$:

\begin{condition}
[Decay assumptions on $\mathbf{B}$-sequence]\label{Cond_Main}Assume that the
functions $B_{m}\in\mathcal{C}_{\ast}\left(  \mathbb{R}^{d}\right)  $ in
\eqref{Basic_ConvolEqu} are dominated in absolute value by functions
$\Gamma_{m}\in\mathcal{C}_{\ast}^{+}\left(  \mathbb{R}^{d}\right)  $ which
satisfy the following conditions:

\begin{enumerate}
\item[B1] There exist numbers $\chi_{n}\left(  s\right)  >0,\ 1\leq s\leq n,$
satisfying $\chi_{n}\left(  s\right)  =\chi_{n}\left(  n-s\right)  ,$ and for
some constant $K_{1}$%
\begin{equation}
\sum_{s=1}^{n-1}\left(  s\wedge\left(  n-s\right)  \right)  \chi_{n}\left(
s\right)  \leq K_{1},\ \forall n, \label{Sequ1}%
\end{equation}
such that%
\begin{equation}
\Gamma_{m}\ast\Gamma_{n}\leq\chi_{m+n}\left(  m\right)  \Gamma_{n+m},
\label{Gamma_Convol1}%
\end{equation}

\item[B2] There exists a constant $K_{2}>0$ such that for $t\leq s\leq2t$ one
has%
\begin{equation}
\Gamma_{s}\leq K_{2}\Gamma_{2t} \label{Gamma_Domination}%
\end{equation}

\item[B3] There exists $K_{3}>0$ such that for $m\leq t,\ m\in\mathbb{N},$
$t\in\mathbb{R}^{+},$ $k=0,1,2,$ one has%
\begin{equation}
\int\phi_{t}\left(  x-y\right)  \left\vert y\right\vert ^{2k}\Gamma_{m}\left(
y\right)  dy\leq K_{3}\gamma_{m}^{\left(  k\right)  }\phi_{t+m}\left(
x\right)  , \label{Gamma_Convol3}%
\end{equation}
where%
\[
\gamma_{m}^{\left(  k\right)  }\overset{\mathrm{def}}{=}\int\left\vert
y\right\vert ^{2k}\Gamma_{m}\left(  y\right)  dy.
\]

\item[B4] The three sequences $\left\{  \gamma_{n}^{\left(  i\right)
}\right\}  _{n\in\mathbb{N}},\ i=0,1,2$, are non-increasing, and%
\begin{equation}
K_{4}\overset{\mathrm{def}}{=}\sum_{n}n\gamma_{n}^{\left(  0\right)  }%
<\infty,\ K_{5}\overset{\mathrm{def}}{=}\sum_{n}\gamma_{n}^{\left(  1\right)
}<\infty,\ K_{6}\overset{\mathrm{def}}{=}\sum_{n}n^{-1}\gamma_{n}^{\left(
2\right)  }<\infty. \label{Gamma_Convol4}%
\end{equation}

\end{enumerate}
\end{condition}

A simple example where the conditions B1-B4 are satisfied is $\Gamma
_{n}=n^{-a}\phi_{n/2},\ a>2,$ but the application to self-avoiding walks needs
a slightly more complicated choice, as will be discussed later.

We will often write $\gamma_{m}$ for $\gamma_{m}^{\left(  0\right)  }.$

We remark that under the above condition, one has for%
\begin{equation}
\label{bm}b_{n}\overset{\mathrm{def}}{=}\int B_{n}\left(  x\right)  dx
\end{equation}
the estimate%
\[
\left\vert b_{n}\right\vert \leq\gamma_{n}%
\]
with%
\begin{equation}
\gamma_{m}\gamma_{n}\leq\chi_{m+n}\left(  m\right)  \gamma_{n+m}.
\label{bound_gamma}%
\end{equation}

Next, fix an arbitrary positive $\varepsilon>0,$ and write%
\begin{equation}
\psi_{n}\overset{\mathrm{def}}{=}\phi_{n\delta\left(  1+\varepsilon\right)  },
\label{Def_psi_n}%
\end{equation}
with $\delta$ defined below in (\ref{Def_delta}).

In the sequel, we will use $L$ as a positive constant, not necessarily the
same at different occurrences, which may depend on $d,\varepsilon,K_{1}-K_{6}%
$, but not on $n,\lambda$.

Let%
\begin{equation}
\zeta_{n}^{\left(  1\right)  }\overset{\mathrm{def}}{=}1+\sum_{i=0}^{2}%
\sum_{m=1}^{n}m^{2-i}\gamma_{m}^{\left(  i\right)  } \label{Def_zeta}%
\end{equation}%
\[
\zeta_{n}^{\left(  2\right)  }\overset{\mathrm{def}}{=}\sum_{m=n}^{\infty
}\left(  \gamma_{m}^{\left(  1\right)  }+m\gamma_{m}\right)  ,
\]%
\[
\overline{\zeta}_{n}\overset{\mathrm{def}}{=}n^{-2}\sum_{j=1}^{n}\zeta
_{j}^{\left(  1\right)  }+n^{-1}\sum_{j=1}^{n}\zeta_{j}^{\left(  2\right)  }.
\]
Because of (\ref{Gamma_Domination}) and (\ref{Gamma_Convol4}) we have%
\begin{equation}
\lim_{n\rightarrow\infty}\overline{\zeta}_{n}=0,\ \sum_{n}n^{-1}%
\overline{\zeta}_{n}<\infty,\ \overline{\zeta}_{m}\leq\overline{\zeta}%
_{2n}\ \mathrm{for\ }n\leq m\leq2n. \label{Bound_zetabar}%
\end{equation}

Remark that%
\begin{equation}
\overline{\zeta}_{n}\geq\frac{1}{n^{2}}\sum_{j=1}^{n}\sum_{m=1}^{n}m^{2}%
\gamma_{m}\geq\frac{n^{2}}{L}\gamma_{n}. \label{Bound_Gamman_by_zetabar}%
\end{equation}

We can finally state our main theorem on convolution equations:

\begin{theorem}
[Local CLT for convolution equations]\label{Th_main} Assume Condition
\ref{Cond_Main}. Then, if $\lambda$ is small enough (depending on
$d,\varepsilon$ and $K_{1}$-$K_{6}$), the following estimates holds%
\begin{equation}
\left\vert C_{n}\left(  x\right)  /c_{n}-\phi_{n\delta}\left(  x\right)
\right\vert \leq L\lambda\left[  \sum_{s=1}^{\left[  n/2\right]  }s\left(
\psi_{s}\ast\Gamma_{n-s}\right)  \left(  x\right)  +\overline{\zeta}_{n}%
\psi_{n}\left(  x\right)  \right]  , \label{mainbound}%
\end{equation}
where $\delta=\delta\left(  \mathbf{B},\lambda\right)  >0$ is defined in
(\ref{Def_delta}) below.
\end{theorem}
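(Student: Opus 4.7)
The plan is to proceed by strong induction on $n$, bootstrapping the desired pointwise Gaussian bound at step $n$ from the analogous bounds at steps $m<n$ together with the recursion (\ref{Basic_ConvolEqu}).

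Before starting the induction one must pin down the diffusion constant $\delta=\delta(\mathbf{B},\lambda)$. Taking Fourier transforms in (\ref{Basic_ConvolEqu}) gives the scalar equation $\widehat{C}_n(k)=e^{-|k|^2/2}\widehat{C}_{n-1}(k)+\lambda\sum_{j=1}^{n}c_j\widehat{B}_j(k)\widehat{C}_{n-j}(k)$. Expanding $\widehat{B}_j(k)$ to second order in $k$ (permissible by rotational invariance and B3) and making the ansatz $\widehat{C}_j(k)\approx c_j e^{-j\delta|k|^2/2}$, matching the coefficient of $|k|^2$ yields a self-consistency equation whose unique small-$\lambda$ solution defines $\delta=1+O(\lambda)$; convergence of the relevant series is guaranteed by B4. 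With $\delta$ so chosen, the leading Gaussian mass and variance of the recursion are reproduced by $c_n\phi_{n\delta}$, and only third-order corrections in $k$ remain. One also needs, as a preliminary, that $c_n$ is trapped in a bounded interval (say $[1/2,2]$) via a small-$\lambda$ perturbation of (\ref{Equ_sequ_c}); this is a direct consequence of B4.

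Set $E_n:=C_n-c_n\phi_{n\delta}$. Substituting $C_m=c_m\phi_{m\delta}+E_m$ into (\ref{Basic_ConvolEqu}) and exploiting the recursion (\ref{Equ_sequ_c}) to cancel the zeroth-order Gaussian mass yields an identity of the schematic form
\[
E_n \;=\; E_{n-1}\ast\phi \;+\; \lambda\sum_{k=1}^{n} c_k B_k\ast E_{n-k} \;+\; R_n,
\]
where $R_n$ collects Gaussian discrepancies such as $c_{n-1}(\phi_{(n-1)\delta}\ast\phi-\phi_{n\delta})$ and $c_k c_{n-k}(B_k\ast\phi_{(n-k)\delta}-b_k\phi_{n\delta})$. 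The very definition of $\delta$ makes the zeroth and second Fourier moments of $R_n$ vanish, so $R_n$ is effectively a third-order Taylor remainder; combining B3 and B4 with a standard Gaussian estimate should give $|R_n(x)|\le L\lambda\overline{\zeta}(n)\psi_n(x)$, producing exactly the second summand in (\ref{mainbound}). For the induction step, denote the target bound by $F_m(x)$. Then $|E_{n-1}\ast\phi|\le c_{n-1}(F_{n-1}\ast\phi)$, while $|\lambda\sum_k c_k B_k\ast E_{n-k}|\le \lambda\sum_k c_k c_{n-k}(\Gamma_k\ast F_{n-k})$. Using B3 to evaluate convolutions of $\Gamma_k$ with the Gaussian $\psi_s$ and B1 to collapse products $\Gamma_k\Gamma_{n-k-s}$, one checks that each of these expressions is absorbed into $F_n$. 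The split of the displayed sum in (\ref{mainbound}) at $\lceil n/2\rceil$ reflects this combinatorics: for $s\le n/2$ the factor $\Gamma_{n-s}$ already has a large index and is kept as is, while contributions with $s>n/2$ are handled via the symmetric $\chi_{m+n}$ bound (using $\chi_n(s)=\chi_n(n-s)$) and recycled into the $\overline{\zeta}(n)\psi_n$ piece through the bounds in (\ref{Bound_zetabar}).

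The main obstacle is the \emph{tight bookkeeping} required to close the induction with the \emph{same} constant $L$ appearing on both sides of the induction inequality: every invocation of B1 or B3 produces multiplicative factors depending on $K_1,\ldots,K_6$, and these must be tamed by choosing $\lambda$ small. Concretely, the induction contraction takes the form $L\le L\cdot(c\lambda+o(1))+(\text{constant})$, which closes only once $\lambda$ is small enough that the coefficient in front of $L$ is strictly less than $1$ uniformly in $n$. A secondary technical point is that one must at the same time control the constant $\delta$ away from zero and infinity, which again follows from B4 and the smallness of $\lambda$.
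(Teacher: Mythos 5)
Your overall philosophy (determine $\delta$ from second moments, set $E_n:=C_n-c_n\phi_{n\delta}$, bound the resulting error recursion using B1--B4, close by a smallness-of-$\lambda$ argument) matches the paper in spirit, but the proposed \emph{induction scheme} has a genuine gap that would prevent the argument from closing.

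You propose to induct directly on the one-step identity
\[
E_n = E_{n-1}\ast\phi + \lambda\sum_{k=1}^{n}c_kB_k\ast E_{n-k}+R_n,
\]
assuming $|E_m|\le L\lambda F_m$ for all $m<n$ and trying to derive the same bound at $n$. This cannot work as stated: the first term already contributes $|E_{n-1}\ast\phi|\le L\lambda(F_{n-1}\ast\phi)\approx L\lambda F_n$, i.e.\ it has coefficient $1$ in front of $L$, not $O(\lambda)$. Adding the $R_n$ contribution, the right-hand side is $\gtrsim L\lambda F_n + L\lambda\overline{\zeta}(n)\psi_n$, strictly larger than $L\lambda F_n$. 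Your displayed contraction shape ``$L\le L(c\lambda+o(1))+\text{const}$'' is therefore not what the recursion produces: the coefficient in front of $L$ is $\approx 1$, so no choice of $\lambda$ rescues the induction. Unrolling the recursion (to kill the coefficient-$1$ term) with the natural propagator $\phi_{n-j}$ is also problematic, since $\phi_{n-j}$ carries the wrong variance: the resulting Gaussians have total time $n-j+\delta(1+\varepsilon)(j-k)+\cdots$, which for $\delta<1$ need not be dominated by $\psi_n$.

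The paper's way around this is the resummed fixed-point operator
\[
\Psi(\mathbf{G})_n=G_n-\sum_{j=1}^{n}a_{n-j}\,\phi_{(n-j)\delta}\Bigl[G_j-\mu^{-1}\phi\,G_{j-1}-\lambda\sum_{m=1}^{j}a_mB_mG_{j-m}\Bigr],
\]
whose key feature is that the convolution kernel is the \emph{reference} Gaussian $\phi_{(n-j)\delta}$ (with the correct variance $\delta$) rather than the one-step propagator $\phi$. The bracket vanishes identically on the true solution $\mathbf{A}$, making it a fixed point, and on the reference $\mathbf{E}=\{a_n\phi_{n\delta}\}$ it reduces to the $\Delta(j,j)$ terms, which are genuinely $O(\lambda)$. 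Combined with Lemma \ref{Le_Main}, which shows that convolving with $\Delta(j,j)$ preserves the weight $f_n$, this gives a contraction on the Banach space $B_{\mathbf{f}}$ and the theorem follows from Banach's fixed point theorem. In short: the missing idea in your proposal is precisely this resummation against the $\delta$-corrected Gaussian family, without which the error recursion does not telescope and the smallness of $\lambda$ cannot be brought to bear on the leading term.

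As a minor remark, your Fourier-space derivation of $\delta$ is an acceptable heuristic, but the paper uses it only to motivate the explicit formula (\ref{Def_delta}), and the rigorous argument never passes to Fourier space.
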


In the example $\Gamma_{n}\left(  x\right)  =n^{-a}\phi_{n/2}\left(  x\right)
,\ 2<a<3,$ one has $\zeta_{n}^{\left(  1\right)  }=\operatorname*{const}\times
n^{3-a},$ $\zeta_{n}^{\left(  2\right)  }=\operatorname*{const}\times
n^{2-a},$ and therefore $\overline{\zeta}_{n}=\operatorname*{const}\times
n^{2-a},$ and thus%
\[
\left\vert C_{n}\left(  x\right)  /c_{n}-\phi_{n\delta}\left(  x\right)
\right\vert \leq L\lambda n^{2-a}\psi_{n}%
\]
giving a local CLT with a precise error estimate. For $a>3,$ we get%
\[
\left\vert C_{n}\left(  x\right)  /c_{n}-\phi_{n\delta}\left(  x\right)
\right\vert \leq L\lambda n^{-1}\psi_{n}.
\]
As remarked above, this $\Gamma_{n}$ cannot work for the application to
self-avoiding walks, and in fact, a pure local CLT is not possible in that case.

\subsection{WSAW on $\mathbb{R}^{d}$ and result}

\label{DefContWSAW} The main motivation for our investigation of these type of
convolution equations comes from WSAW as first investigated by Brydges and
Spencer in the seminal paper \cite{BrSp}. Their results are for random walks
on the $d$-dimensional lattice $\mathbb{Z}^{d},\ d\geq5.$ In contrast, we now
introduce and investigate weakly self-avoiding random walks on $\mathbb{R}%
^{d}$ with standard normal increments. The model has two parameters
$\lambda,\rho>0,$ $\rho$ being the range of the interaction, and $\lambda$ the
strength. We set $\mathbb{I}_{\rho}\left(  x\right)  \overset{\mathrm{def}}%
{=}1_{\left\{  \left\vert x\right\vert \leq\rho\right\}  }$, and if
$\mathbf{x}=\left(  x_{1},\ldots,x_{n}\right)  \in\left(  \mathbb{R}%
^{d}\right)  ^{n},$ and $0\leq i<j\leq n,$ we set $U_{ij}^{\rho}\left(
\mathbf{x}\right)  \overset{\mathrm{def}}{=}\mathbb{I}_{\rho}\left(
x_{j}-x_{i}\right)  $, where $x_{0}=0.$ Then, for $0\leq\lambda\leq1,$ define
the probability measure $P_{n,\lambda,\rho}$ on $\left(  \mathbb{R}%
^{d}\right)  ^{n}$ by its density with respect to Lebesgue measure:%
\begin{equation}
p_{n,\lambda,\rho}\left(  \mathbf{x}\right)  =\frac{1}{Z_{n,\lambda,\rho}%
}K_{\lambda,\rho}\left[  0,n\right]  \left(  \mathbf{x}\right)  \Phi\left[
0,n\right]  \left(  \mathbf{x}\right)  , \label{fundamental_formula}%
\end{equation}
where%
\begin{gather}
K_{\lambda,\rho}\left[  a,b\right]  \left(  \mathbf{x}\right)  \overset
{\mathrm{def}}{=}\prod_{a\leq i<j\leq b}\left(  1-\lambda U_{ij}^{\rho}\left(
\mathbf{x}\right)  \right)  ,\label{Def_K}\\
\Phi\left[  a,b\right]  \left(  \mathbf{x}\right)  \overset{\mathrm{def}}%
{=}\prod_{i=a+1}^{b}\phi\left(  x_{i}-x_{i-1}\right)  . \label{Def_Phi}%
\end{gather}
$Z_{n,\lambda,\rho}$ is the usual partition function, i.e., the norming factor
which makes $p_{n,\beta,\rho}$ into a probability density. The main interest
is to prove a central limit theorem for this measure, in the simplest case for
the last marginal measure. It is convenient to consider first the unnormalized
kernel $C_{n}^{\mathrm{SAW}}\left(  x\right)  ,~x\in\mathbb{R}^{d},$ which is
defined to be the last marginal density of $Z_{n,\beta}p_{n,\beta,\rho}\left(
\mathbf{x}\right)  $, i.e.,%
\begin{equation}
C_{n}^{\mathrm{SAW}}\left(  x_{n}\right)  =\int K_{\lambda,\rho}\left[
0,n\right]  \left(  \mathbf{x}\right)  \Phi\left[  0,n\right]  \left(
\mathbf{x}\right)  \prod_{i=1}^{n-1}dx_{i}. \label{Def_C_SAW}%
\end{equation}
By using the lace expansion (as we will show in Section \ref{Subsect_lace}),
the $C_{n}^{\mathrm{SAW}}$ satisfy an equation of the form%
\begin{equation}
C_{n}^{\mathrm{SAW}}=C_{n-1}^{\mathrm{SAW}}\ast\phi+\sum_{k=1}^{n}\Pi_{k}\ast
C_{n-k}^{\mathrm{SAW}}, \label{recursion}%
\end{equation}
where the kernels $\Pi_{k}$ describe the interactions through the weak
self-avoidance. The $\Pi_{k}$ are complicated functions and are hard to
evaluate precisely. However, one crucial property is that the leading order
decay is the same as that of the $C_{k}^{\mathrm{SAW}}.$ It therefore looks
natural to write $\Pi_{k}=\lambda c_{k}^{\mathrm{SAW}}B_{k},$ and one seeks
for conditions on the $B_{k}$ ensuring a CLT for solutions of
(\ref{Basic_ConvolEqu}). We can then apply Theorem \ref{Th_main}, provided we
can check Condition \ref{Cond_Main} on this $\mathbf{B}$ sequence. The theorem
we obtain as a corollary of Theorem \ref{Th_main} is the following:

\begin{theorem}
[Local CLT for WSAW]\label{Th_main_SAW}For $d\geq5$, $\rho\in(0,1]$, and
$\varepsilon>0$ there exists $\lambda_{0}\left(  d,\varepsilon\right)  >0$
such that for all $\lambda\in(0,\lambda_{0}]$ there exist a parameter
$\delta\left(  d,\rho,\lambda\right)  >0$ and a constant $K\left(
d,\varepsilon,\lambda\right)  >0$ such that for all $n\in\mathbb{N}$%
\begin{equation}
\left\vert \frac{C_{n}^{\mathrm{SAW}}\left(  x\right)  }{c_{n}^{\mathrm{SAW}}%
}-\phi_{n\delta}\left(  x\right)  \right\vert \leq K\left[  r_{n}\phi
_{n\delta\left(  1+\varepsilon\right)  }\left(  x\right)  +n^{-d/2}\sum
_{j=1}^{\left\lceil n/2\right\rceil }j\phi_{j\delta\left(  1+\varepsilon
\right)  }\left(  x\right)  \right]  , \label{CLT}%
\end{equation}
with%
\begin{equation}
r_{n}=\left\{
\begin{array}
[c]{cc}%
n^{-1/2} & \mathrm{for\ }d=5,\\
n^{-1}\log n & \mathrm{for\ }d=6,\\
n^{-1} & \mathrm{for\ }d\geq7.
\end{array}
\right.  \label{Def_rn}%
\end{equation}

\end{theorem}

\begin{remark}

\begin{enumerate}
\item[a)] The bound leads to $\left\Vert C_{n}^{\mathrm{SAW}}/c_{n}%
^{\mathrm{SAW}}-\phi_{n\delta}\right\Vert _{1}=O\left(  r_{n}\right)  $.

\item[b)] The theorem does not give a local CLT as at $x=0$ both
$\phi_{n\delta}\left(  0\right)  $ and the bound are of order $n^{-d/2}.$ A
moments reflection however reveals that there cannot be a local CLT as the
starting point keeps to have a noticeable influence on $C_{n}^{\mathrm{SAW}%
}\left(  x\right)  /c_{n}^{\mathrm{SAW}}$ for points $x$ at distance of order
$1$ from the origin. However, our bound proves%
\[
\lim_{r\rightarrow\infty}\limsup_{n\rightarrow\infty}\sup_{x:\left\vert
x\right\vert \geq r}n^{d/2}\left\vert \frac{C_{n}^{\mathrm{SAW}}\left(
x\right)  }{c_{n}^{\mathrm{SAW}}}-\phi_{n\delta}\left(  x\right)  \right\vert
=0.
\]
So the result comes as close as possible to a local CLT.

\item[c)] The summation up to $\left\lceil n/2\right\rceil $ is somewhat
arbitrary, and can be replaced by $\left\lceil \alpha n\right\rceil $ for any
$\alpha\in\left(  0,1\right)  ,$ adapting $K.$ In fact, for $0<\alpha<1$,
there exists a $K(\alpha)$ such that for all $x\in\mathbb{R}^{d}$,
\[
n^{-d/2}\sum_{j=\left\lceil \alpha n\right\rceil }^{n}j\phi_{j\delta\left(
1+\varepsilon\right)  }\left(  x\right)  \leq K\left(  \alpha\right)
r_{n}\phi_{n\delta\left(  1+\varepsilon\right)  }\left(  x\right)  .
\]
We have chosen $\alpha=1/2$ for convenience. The second summand on the right
hand side of (\ref{CLT}) is important as it takes care of the failure of the
local CLT for $x$ near the origin.

\item[d)] The choice of an $\varepsilon>0$ on the right hand side of
(\ref{CLT}) is essentially just for convenience, as it helps to swallow all
kind of polynomial factors in $x$ with which we prefer not to be bothered.
Remark that if the bound (\ref{CLT}) is correct for a positive $\varepsilon
>0,$ it is also true for any larger $\varepsilon,$ with a changed constant
$K.$ It will be convenient to assume that $\varepsilon$ is small, say
$\varepsilon\leq1/100$.
\end{enumerate}
\end{remark}

\subsection{Related literature and structure of the paper}

\label{Literature} Self-avoiding random walks are models for polymer chains of
relevance in statistical physics. Despite their simple definition, a
mathematical rigorous analysis turns out to be a major challange. We refer to
\cite{BaDuGoSl} for a recent survey on this topic. Since the seminal paper by
Brydges and Spencer \cite{BrSp}, the analysis of these models in high
dimensions ($d\geq5$) has been carried out by using the so called \emph{ lace
expansion}. The latter is a diagrammatic type of expansion based on graphs
(which we recall in Section \ref{Subsect_lace}) to deal with combinatorial
objects of relevance in statistical mechanics, e.g. self-avoiding walks,
percolation models, lattice trees. For the interested reader, \cite{Slade}
represents the main reference on this type of expansion. While using the lace
expansion for the analysis of high dimensional WSAW or related models
satisfying equation \eqref{Basic_ConvolEqu}, the procedure is by now standard
and can be roughly summurized via the following three steps.

\begin{enumerate}
\item[1)] Show that the unnormalized densities $C_{n}^{\mathrm{SAW}}$ satisfy
the convolution equation in \eqref{Basic_ConvolEqu}.

\item[2)] Estimate the $B_{k}$ coefficents in \eqref{Basic_ConvolEqu}.

\item[3)] Deduce from the previous steps and equation \eqref{Basic_ConvolEqu}
the growth of the normalized $C_{n}^{\mathrm{SAW}}$ and some detailed Gaussian behavior.
\end{enumerate}

Step 3) is the most involved and technical, especially in \cite{BrSp}. A
successful attempt to simplify this step has been obtained in
\cite{vdHdHSl,vdHSl}, where the authors introduced a new inductive approach.
Both methods in \cite{BrSp, vdHdHSl,vdHSl} heavily rely on spatial Fourier
transforms. In contrast, the method we use does not make use of Fourier
analysis and is based on a fixed point iteration. This novel method is very
different from the previous ones, it was originally developed in the thesis of
Christine Ritzmann \cite{ChRitz,BoRitz}, but never appeared. One of the main
goal of this paper is to present this method with some improvements,
generalizations, and simplifications with respect to \cite{BoRitz,ChRitz}. The
main new feature compared to \cite{BoRitz,ChRitz} is to use a more flexible
and general way to define the operator whose fixed point characterizes the
solution of the convolution equation. Also, in \cite{BoRitz,ChRitz} was
entirely taylored for the application to self-avoiding walks, whereas our main
result on the convolution equations, Theorem \ref{Th_main}, is much more general.

The method gives error bounds in the local CLT that are better than those
obtained with Fourier techniques. The second main novelty of this paper
concerns the application to WSAW in continuous space. In fact, to our
knowledge, all the previous works including \cite{BoRitz,ChRitz} focus on WSAW
on $\mathbb{Z}^{d}$. One of the reason to introduce this variant is that, to
explain our approach based on fixed point iteration, continuous space is
actually more convenient than the lattice. In other words, the emphasis here
is to present an elementary and completely self-contained proof of a sharp CLT
for solutions of (\ref{Basic_ConvolEqu}), together with the perhaps simplest
possible application. No knowledge of earlier versions of lace expansions or
\cite{ChRitz} are assumed.

The rest of this paper is organized as follows. Section \ref{ProofGeneralThm}
is devoted to the proof of the local CLT for general convolution equation,
Theorem \ref{Th_main}. Then, Section \ref{Sect_SAW} focus on the application
to WSAW in continuous space. By performing the three steps sketched above we
show how to derive the local CLT in Theorem \ref{Th_main_SAW} from Theorem
\ref{Th_main}.

\bigskip

\noindent\textbf{Acknowledgement: }We would like to thank the referees for the
careful reading, and the many suggestions which helped to improve the manuscript.

\section{Proof of the local CLT for convolution equations}

\label{ProofGeneralThm} In this section we prove Theorem \ref{Th_main}. The
proof is divided in three main steps which we perform in the following three
sections. First, in Section \ref{cnProp} we analyze the normalizing sequence
$\left\{  c_{n}\right\}  $. In the second step, Section \ref{MainProof}, we
prove Theorem \ref{Th_main} by assuming the technical Lemma \ref{Le_MainEst}
which we prove right after in Section \ref{Subsect_Proof_MainLemma}.

\subsection{On the connectivity constants}

\label{cnProp} A first question we address is about the behavior of the
sequence $\left\{  c_{n}\right\}  $.

\begin{proposition}
\label{Th_Norming}Assume Condition \ref{Cond_Main} and let $\mathbf{c}$ be the
sequence defined by (\ref{Equ_sequ_c}). Then if $\lambda$ is small enough the
following holds:

\begin{enumerate}
\item[a)] There exists a unique $\mu>0$ such that $\alpha\overset
{\mathrm{def}}{=}\lim_{n\rightarrow\infty}\mu^{-n}c_{n}$ exists in $\left(
0,\infty\right)  .$

\item[b)] Writing $a_{n}\overset{\mathrm{def}}{=}\mu^{-n}c_{n},$ one has%
\begin{equation}
\left\vert a_{n+1}-a_{n}\right\vert <L\lambda\overline{\gamma}_{n}%
\overset{\mathrm{def}}{=}L\lambda\sum\nolimits_{j=n}^{\infty}\gamma_{j}.
\label{Sequ3}%
\end{equation}

\item[c)]
\begin{equation}
\mu^{-1}=1-\lambda\sum\nolimits_{k=1}^{\infty}a_{k}b_{k}. \label{Sequ4}%
\end{equation}

\end{enumerate}
\end{proposition}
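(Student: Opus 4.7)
The plan is to work in the scaled variable $a_n = \mu^{-n}c_n$, determine $\mu$ by a self-consistent equation, and bootstrap the differences $a_{n+1}-a_n$ to the sharp rate claimed in (b). Multiplying (\ref{Equ_sequ_c}) by $\mu^{-n}$ gives
\begin{equation*}
\mu a_n - a_{n-1} = \lambda \mu \sum_{k=1}^{n} a_k b_k a_{n-k}, \qquad a_0 = 1.
\end{equation*}
If $a_n \to \alpha \in (0,\infty)$, dominated convergence (using $\sum \gamma_k < \infty$ from B4) yields $(\mu - 1)\alpha = \lambda \mu \alpha \sum_{k \geq 1} a_k b_k$, which is exactly (c). This motivates defining $\mu = 1/z_{0}$, where $z_{0}$ is the unique real positive solution near $1$ of $z = 1 - \lambda \sum_{k \geq 1} c_k b_k z^k$. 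A crude induction on (\ref{Equ_sequ_c}) gives $c_n \leq (1+C\lambda)^n$, so the series has radius of convergence strictly larger than $1$; the implicit function theorem then supplies $z_0 = 1 + O(\lambda)$ and the uniform two-sided bounds $0 < a_- \leq a_n \leq a_+ < \infty$.

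With $\mu$ fixed, writing the recursion at index $n+1$ and using (c) to replace $\mu^{-1}$ yields the central identity for $\Delta_n := a_{n+1} - a_n$:
\begin{equation*}
\Delta_n = \lambda \sum_{k=1}^{n+1} a_k b_k (a_{n+1-k} - a_n) - \lambda\, a_n \sum_{k \geq n+2} a_k b_k.
\end{equation*}
The second term is immediately bounded by $L\lambda \overline{\gamma}_{n+2} \leq L\lambda \overline{\gamma}_n$. For the first, writing $a_{n+1-k} - a_n = -\sum_{j=n+1-k}^{n-1} \Delta_j$ makes it a linear functional of the earlier $\Delta_j$ with coefficients $O(\gamma_k)$, and one then proves $|\Delta_n| \leq L \lambda \overline{\gamma}_n$ by strong induction, the smallness of $\lambda$ being used to absorb the self-consistent contribution into the left-hand side.

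The technical core, which I expect to be the main obstacle, is the convolution estimate
\begin{equation*}
\lambda \sum_{k=1}^{n+1} \gamma_k \sum_{j=n+1-k}^{n-1} \overline{\gamma}_j \leq C \lambda\, \overline{\gamma}_n,
\end{equation*}
which must hold with constants independent of $n$ (while $\lambda$ remains free to be chosen small). Splitting at $k = \lceil n/2 \rceil$, the low-$k$ part has inner sum of length $\leq k$ with entries $\overline{\gamma}_j$ for $j \geq n/2$ that are comparable to $\overline{\gamma}_n$ (by the doubling-type consequence of (\ref{Gamma_Domination}) on the tail sum), and $\sum_k k \gamma_k \leq K_4$ closes the piece. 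The high-$k$ part uses the crude bound $|a_{n+1-k} - a_n| \leq 2\|a\|_\infty$ together with $\sum_{k > n/2} \gamma_k \leq \overline{\gamma}_{\lceil n/2 \rceil + 1}$, controlled by the same tail-doubling and by the weighted summability (\ref{Sequ1}) combined with (\ref{bound_gamma}). This delicate accounting is what fixes the threshold on $\lambda$.

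Parts (a) and (c) then follow at once. By (b), $\sum_n |\Delta_n| \leq L\lambda \sum_n \overline{\gamma}_n = L\lambda \sum_n n \gamma_n < \infty$ by B4, so $(a_n)$ is Cauchy, with positive limit $\alpha$ by the lower crude bound $a_n \geq a_-$. Uniqueness of $\mu$ is immediate: for any $\mu' \neq \mu$ the sequence $(\mu')^{-n} c_n = (\mu/\mu')^n a_n$ either blows up or tends to zero. Finally (c) holds by the very construction of $\mu$ as the fixed point.
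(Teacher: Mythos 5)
Your proposal takes a genuinely different route from the paper, but there is a gap at its foundation.

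The paper constructs $\mu$ and the bounded sequence $\{a_n\}$ simultaneously by a Banach fixed-point argument in the weighted space $l_\gamma$: the increments $a_n - a_{n-1}$ are the unknowns, the map $\psi$ encodes the recursion, and the contraction yields in one stroke the existence of $\mu$ (as $\varpi$), the uniform bounds on $a_n$, and the decay (\ref{Sequ3}). You instead try to fix $\mu$ first via the scalar equation $z = 1-\lambda\sum_{k\geq1} c_k b_k z^k$ using the implicit function theorem, and then bootstrap the increments by strong induction. The bootstrap itself, in particular the convolution estimate $\sum_{k\leq n+1}\gamma_k\sum_{j\geq n+1-k}^{n-1}\overline{\gamma}_j \leq C\,\overline{\gamma}_n$ with the split at $k=\lceil n/2\rceil$ and the doubling property from B2, is sound and is essentially the same combinatorics the paper uses in the $\psi_2$-Lipschitz estimate via (\ref{Sequ5}). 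The problem is the first step.

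The claim that $c_n\leq(1+C\lambda)^n$ implies that $\sum_k c_k b_k z^k$ has radius of convergence strictly larger than $1$ is false: since $|b_k|\leq\gamma_k$ and $\gamma_k$ decays only polynomially (B4, B2), this bound gives radius at least $(1+C\lambda)^{-1}<1$, the wrong side of $1$. In fact, once the result is known, $c_n\sim\alpha\mu^n$ and the radius is exactly $\mu^{-1}$, so the candidate root $z_0=\mu^{-1}$ sits precisely on the boundary of the disk of convergence: $F(z)=z-1+\lambda\sum_k c_k b_k z^k$ is not a priori defined, let alone $C^1$, in a neighborhood of the point where you want to apply the implicit function theorem. (If $\mu<1$, which is the typical SAW situation, $z_0>1$ lies strictly outside the disk guaranteed by the crude bound.) The uniform two-sided bounds $a_-\leq a_n\leq a_+$ are also not a consequence of the crude bound on $c_n$ together with the location of $z_0$: $a_n=c_n z_0^n$ is controlled only up to the gap between the crude rate $(1+C\lambda)^n$ and the true rate $\mu^n$, which is exactly what is being proved. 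So both the construction of $\mu$ and the boundedness input to your induction would need a different argument — for instance a fixed-point or iterative scheme like the paper's, or an explicit truncation and passage to the limit — before the induction for (b) can start.
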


\begin{remark}

\begin{enumerate}
\item[a)] Plugging the expression (\ref{Sequ4}) into (\ref{Equ_sequ_c}), we
see that $\mathbf{a}=\left\{  a_{n}\right\}  _{n\in\mathbb{N}_{0}}$ satisfies
$a_{0}=1,$ and%
\begin{equation}
a_{n}=a_{n-1}-\lambda a_{n-1}\sum\nolimits_{k=n+1}^{\infty}a_{k}b_{k}%
+\lambda\sum_{k=1}^{n}a_{k}b_{k}\left(  a_{n-k}-a_{n-1}\right)  ,\ n\geq1.
\label{Equ_sequ_a}%
\end{equation}

\item[b)] From (\ref{Sequ3}) we get%
\begin{equation}
\left\vert a_{n}-\alpha\right\vert \leq L\lambda\sum_{k=n}^{\infty}k\gamma
_{k}. \label{alpha}%
\end{equation}

\end{enumerate}
\end{remark}

The idea of the proof is simple: \textit{Assuming }that such a $\mu$ and a
sequence $\left\{  a_{n}\right\}  $ exist, one gets from (\ref{Equ_sequ_c})%
\[
\mu^{n}a_{n}=\mu^{n-1}a_{n-1}+\lambda\mu^{n}\sum_{k=1}^{n}a_{k}b_{k}a_{n-k}.
\]
Letting then $n\rightarrow\infty$, assuming that $\lim_{n\rightarrow\infty
}a_{n}$ exists and is $\neq0,$ one sees that $\mu$ has to be given by
(\ref{Sequ4}) in terms of $\left\{  a_{n}\right\}  .$ Plugging that back, one
arrives at the conclusion, that the $\mathbf{a}$-sequence has to satisfy
(\ref{Equ_sequ_a}). The idea therefore is first to prove by a fixed point
argument that this equation has a nice solution, and then check that%
\[
d_{n}=\left(  1-\lambda\sum\nolimits_{k=1}^{\infty}a_{k}b_{k}\right)
^{-n}a_{n}%
\]
satisfies the equation (\ref{Equ_sequ_c}), and therefore $d_{n}=c_{n},$
finishing the proof.

\begin{proof}
[Proof of Proposition \ref{Th_Norming}]Let $l_{1}\left(  \mathbb{N}\right)  $
be the Banach space of absolutely summable sequences $\mathbf{q}=\left\{
q_{n}\right\}  _{n\in\mathbb{N}},$ and $l_{\gamma}\left(  \mathbb{N}\right)  $
be the set of sequences with $\left\Vert \mathbf{q}\right\Vert _{\gamma
}\overset{\mathrm{def}}{=}\sup_{n}\overline{\gamma}_{n}^{-1}\left\vert
q_{n}\right\vert <\infty.$ $\left(  l_{\gamma}\left(  \mathbb{N}\right)
,\left\Vert \cdot\right\Vert _{\gamma}\right)  $ is a Banach space, too, and
by (\ref{Gamma_Convol4}), $l_{\gamma}\left(  \mathbb{N}\right)  \subset
l_{1}\left(  \mathbb{N}\right)  ,$ and the embedding is continuous. The linear
map $s:l_{1}\left(  \mathbb{N}\right)  \rightarrow l_{\infty}\left(
\mathbb{N}_{0}\right)  $ is defined by $s\left(  \mathbf{q}\right)  _{0}=0,$
and $s\left(  \mathbf{q}\right)  _{n}\overset{\mathrm{def}}{=}\sum_{j=1}%
^{n}q_{j},\ n\geq1.$ \ Evidently, $\left\Vert s\left(  \mathbf{q}\right)
\right\Vert _{\infty}\leq\left\Vert \mathbf{q}\right\Vert _{1}\leq L\left\Vert
\mathbf{q}\right\Vert _{\gamma}$. We also define the affine mapping
$S:l_{1}\left(  \mathbb{N}\right)  \rightarrow l_{\infty}\left(
\mathbb{N}_{0}\right)  $ by $S\left(  \mathbf{q}\right)  \overset
{\mathrm{def}}{=}\mathbf{1+}s\left(  \mathbf{q}\right)  $, where $\mathbf{1}$
is the sequence identical to $1$. We define two mappings $\psi_{1},\psi_{2}$
from $l_{1}\left(  \mathbb{N}\right)  $ to the set of sequences with index set
$\mathbb{N}$. We set%
\begin{gather*}
\psi_{1}\left(  \mathbf{q}\right)  _{n}\overset{\mathrm{def}}{=}S\left(
\mathbf{q}\right)  _{n-1}\sum_{k=n+1}^{\infty}b_{k}S\left(  \mathbf{q}\right)
_{k},\\
\psi_{2}\left(  \mathbf{q}\right)  _{n}\overset{\mathrm{def}}{=}\sum_{k=2}%
^{n}S\left(  \mathbf{q}\right)  _{k}b_{k}\left[  s\left(  \mathbf{q}\right)
_{n-k}-s\left(  \mathbf{q}\right)  _{n-1}\right]
\end{gather*}
for $n\geq1$. Finally we set $\psi\overset{\mathrm{def}}{=}-\lambda\psi
_{1}+\lambda\psi_{2}.$ Remark first that%
\[
\psi\left(  \mathbf{0}\right)  _{n}=\lambda\psi_{1}\left(  \mathbf{0}\right)
_{n}=\lambda\sum_{k=n+1}^{\infty}b_{k},
\]
where $\mathbf{0}$ is the sequence identical to $0.$ We conclude that
$\left\Vert \psi\left(  \mathbf{0}\right)  \right\Vert _{\gamma}\leq
L\lambda,$ by (\ref{bound_gamma}).%
\begin{align*}
\left\vert \psi_{1}\left(  \mathbf{q}\right)  _{n}-\psi_{1}\left(
\mathbf{p}\right)  _{n}\right\vert  &  \leq\left\Vert s\left(  \mathbf{q}%
\right)  -s\left(  \mathbf{p}\right)  \right\Vert _{\infty}\left[
\sum_{k=n+1}^{\infty}\left\vert b_{k}S\left(  \mathbf{q}\right)
_{k}\right\vert +\left\vert S\left(  \mathbf{p}\right)  _{n-1}\right\vert
\sum_{k=n+1}^{\infty}\left\vert b_{k}\right\vert \right] \\
&  \leq L\left\Vert \mathbf{q}-\mathbf{p}\right\Vert _{\gamma}\left[
2+L\left\Vert \mathbf{q}\right\Vert _{\gamma}+L\left\Vert \mathbf{p}%
\right\Vert _{\gamma}\right]  \sum_{k=n+1}^{\infty}\gamma_{k}%
\end{align*}%
\[
\left\Vert \psi_{1}\left(  \mathbf{q}\right)  -\psi_{1}\left(  \mathbf{p}%
\right)  \right\Vert _{\gamma}\leq L\left\Vert \mathbf{q}-\mathbf{p}%
\right\Vert _{\gamma}\left(  1+\left\Vert \mathbf{q}\right\Vert _{\gamma
}+\left\Vert \mathbf{p}\right\Vert _{\gamma}\right)  .
\]
Similarly, for $n\geq2,$ by resummation%
\begin{align}
\psi_{2}\left(  \mathbf{q}\right)  _{n}-\psi_{2}\left(  \mathbf{p}\right)
_{n}  &  =\sum_{j=1}^{n-1}q_{j}\sum_{k=n-j+1}^{n}\left(  S\left(
\mathbf{p}\right)  _{k}-S\left(  \mathbf{q}\right)  _{k}\right)
b_{k}\label{Sequ5}\\
&  +\sum_{j=1}^{n-1}\left(  p_{j}-q_{j}\right)  \sum_{k=n-j+1}^{n}S\left(
\mathbf{p}\right)  _{k}b_{k}.\nonumber
\end{align}
In the first summand, we estimate $\left\vert S\left(  \mathbf{q}\right)
_{k}-S\left(  \mathbf{p}\right)  _{k}\right\vert $ by $L\left\Vert
\mathbf{q}-\mathbf{p}\right\Vert _{\gamma},$ so we get for this part an
estimate%
\begin{equation}
\leq L\left\Vert \mathbf{q}\right\Vert _{\gamma}\left\Vert \mathbf{q}%
-\mathbf{p}\right\Vert _{\gamma}\sum_{j=1}^{n-1}\sum_{t=j}^{\infty}\gamma
_{t}\sum_{k=n-j+1}^{n}\gamma_{k}. \label{Est1}%
\end{equation}
Further,
\begin{align}
\sum_{j=1}^{n-1}\sum_{t=j}^{\infty}\gamma_{t}\sum_{k=n-j+1}^{n}\gamma_{k}  &
\leq\sum_{j=1}^{n-1}\sum_{t=j}^{\infty}\sum_{k=n-j+1}^{n}\chi_{t+k}\left(
t\right)  \gamma_{t+k}\nonumber\\
&  \leq\sum_{s=n+1}^{\infty}\gamma_{s}\sum_{t=1}^{s-1}N\left(  s,t\right)
\chi_{s}\left(  t\right)  , \label{Est2}%
\end{align}
where we have used (\ref{bound_gamma}), and where $N\left(  s,t\right)  $ is
the number of indices $j$ satisfying $1\leq j\leq n-1,\ t\geq j,\ n-j+1\leq
s-t\leq n,$ so that $N\left(  s,t\right)  \leq t\wedge(s-t),$ and using
(\ref{Sequ1}), from \eqref{Est1} and \eqref{Est2}, we get for the first
summand of (\ref{Sequ5}) an estimate $\leq L\left\Vert \mathbf{\mathbf{q}%
}\right\Vert _{\gamma}\left\Vert \mathbf{q}-\mathbf{p}\right\Vert _{\gamma
}\overline{\gamma}_{n}.$ In a similar way, we get for the second summand an
estimate $\leq L\left(  1+\left\Vert \mathbf{p}\right\Vert _{\gamma}\right)
\left\Vert \mathbf{q}-\mathbf{p}\right\Vert _{\gamma}\overline{\gamma}_{n}$
and therefore%
\[
\left\Vert \psi_{2}\left(  \mathbf{q}\right)  -\psi_{2}\left(  \mathbf{p}%
\right)  \right\Vert _{\gamma}\leq L\left\Vert \mathbf{q}-\mathbf{p}%
\right\Vert _{\gamma}\left(  1+\left\Vert \mathbf{q}\right\Vert _{\gamma
}+\left\Vert \mathbf{p}\right\Vert _{\gamma}\right)  ,
\]
leading to%
\[
\left\Vert \psi\left(  \mathbf{q}\right)  -\psi\left(  \mathbf{p}\right)
\right\Vert _{\gamma}\leq L\lambda\left\Vert \mathbf{q}-\mathbf{p}\right\Vert
_{\gamma}\left(  1+\left\Vert \mathbf{q}\right\Vert _{\gamma}+\left\Vert
\mathbf{p}\right\Vert _{\gamma}\right)  .
\]
From that and $\psi\left(  \mathbf{0}\right)  \in l_{\gamma}\left(
\mathbb{N}\right)  ,$ it follows that $\psi$ maps $l_{\gamma}\left(
\mathbb{N}\right)  $ continuously into itself, and furthermore, if $\lambda$
is small enough, the iterates $\psi^{n}\left(  \mathbf{0}\right)  $ form a
Cauchy sequence, and therefore converge in $l_{\gamma}\left(  \mathbb{N}%
\right)  $ to an element $\mathbf{\xi}$ with $\left\Vert \mathbf{\xi
}\right\Vert _{\gamma}\leq L\lambda$ which is a fixed point of $\psi.$

If we write%
\[
\mathbf{\eta}\overset{\mathrm{def}}{=}S\left(  \mathbf{\xi}\right)
,\ \varpi\overset{\mathrm{def}}{=}\left(  1-\lambda\sum\nolimits_{k=1}%
^{\infty}\eta_{k}b_{k}\right)  ^{-1},
\]
then it is evident, using the fact that $\mathbf{\xi}$ is a fixed point of
$\psi,$ that the sequence $\mathbf{\eta}$ satisfies (\ref{Equ_sequ_a}),
implying that the sequence $\left\{  \eta_{n}\varpi^{n}\right\}  $ satisfies
(\ref{Equ_sequ_c}), and therefore it \textit{is }this sequence. So it follows
that $\varpi=\mu,$ and $\mu^{-n}c_{n}$ satisfies the properties listed in a)-c).
\end{proof}

\subsection{Proof of Theorem \ref{Th_main}\label{MainProof}}

Before giving the proof, let us first start with a few observations.

As $B_{m}\in\mathcal{C}_{\ast}\left(  \mathbb{R}^{d}\right)  ,$ the
\textquotedblleft covariance\textquotedblright\ matrix satisfies%
\begin{equation}
\int x^{T}xB_{m}\left(  x\right)  dx=\overline{b}_{m}I_{d}, \label{Def_bm_bar}%
\end{equation}
for some $\overline{b}_{m}\in\mathbb{R}$ (possibly negative), $I_{d}$ being
the $d\times d$ unit matrix. Evidently, $\left\vert \overline{b}%
_{m}\right\vert \leq\gamma_{m}^{\left(  1\right)  },$ and by Condition
\ref{Cond_Main} (\ref{Gamma_Convol4}), the following number is well defined
(for small enough $\lambda$):%

\begin{equation}
\delta\overset{\mathrm{def}}{=}\frac{\mu^{-1}+\lambda\sum_{m=1}^{\infty}%
a_{m}\overline{b}_{m}}{\mu^{-1}+\lambda\sum_{m=1}^{\infty}ma_{m}b_{m}},
\label{Def_delta}%
\end{equation}
where $\mu$ and $b_{m}$ are given by \eqref{Sequ4} and \eqref{bm},
respectively. In particular, by choosing $\lambda>0$ small enough, we can
achieve that%
\begin{equation}
\left\vert 1-\delta\right\vert \leq L\lambda,\ \left\vert 1-\mu\right\vert
\leq L\lambda. \label{delta_bound}%
\end{equation}
and also%
\[
1/2\leq a_{n}\leq3/2,\ \forall n,
\]
which we assume henceforward.

The idea of the proof of Theorem \ref{Th_main} is to consider an appropriate
Banach space of sequences of functions with a norm that encodes the error we
expect in the local CLT. We then prove that $\left\{  \mu^{-n}C_{n}-\mu
^{-n}c_{n}\phi_{n\delta}\right\}  _{n\in\mathbb{N}}$ is an element of this
Banach space by proving that it appears as a limit of a Cauchy sequence. This
implies the desired result.

Let us start by describing the Banach space we need. Let $\mathbf{f}=\left\{
f_{n}\right\}  $ be a sequence of functions in $\mathcal{C}_{\ast}^{+}\left(
\mathbb{R}^{d}\right)  $ which satisfy $\lim_{n\rightarrow\infty}\sup_{x}%
f_{n}\left(  x\right)  =0.$ For any sequence $\mathbf{g}=\left\{
g_{n}\right\}  $,\ $g_{n}\in\mathcal{C}_{\ast}\left(  \mathbb{R}^{d}\right)  $
define%
\[
\left\Vert \mathbf{g}\right\Vert _{\mathbf{f}}\overset{\mathrm{def}}{=}%
\sup_{n}\sup_{x\in\mathbb{R}^{d}}\frac{\left\vert g_{n}\left(  x\right)
\right\vert }{f_{n}\left(  x\right)  },
\]
and write $\mathcal{B}_{\mathbf{f}}\overset{\mathrm{def}}{=}\left\{
\mathbf{g}:\left\Vert \mathbf{g}\right\Vert _{\mathbf{f}}<\infty\right\}  $
which equipped with $\left\Vert \mathbf{\cdot}\right\Vert _{\mathbf{f}}$ is a
Banach space.

For our purposes, we consider the Banach space $\left(  \mathcal{B}%
_{\mathbf{f}},\left\Vert \mathbf{\cdot}\right\Vert _{\mathbf{f}}\right)  $
with $\mathbf{f}=\left\{  f_{n}\right\}  $ defined by
\begin{equation}
f_{n}\overset{\mathrm{def}}{=}\sum_{s=1}^{\left[  n/2\right]  }s\psi_{s}%
\ast\Gamma_{n-s}+\overline{\zeta}_{n}\psi_{n}, \label{Def_fn}%
\end{equation}
where $\psi_{n}\overset{\mathrm{def}}{=}\phi_{n\delta\left(  1+\varepsilon
\right)  }$. (As remarked before, the choice of $\varepsilon>0$ is only of
minor relevance, but it influences the notion of \textquotedblleft small
enough $\lambda$\textquotedblright). Note that the sequence $\left\{
f_{n}\right\}  $ is the same as the sequence of error terms in the right hand
side of \eqref{mainbound}.

Next, let $\mathbf{C}$ be the solution of (\ref{Basic_ConvolEqu}) and put
$A_{n}\overset{\mathrm{def}}{=}C_{n}\mu^{-n}$. This sequence satisfies
$A_{0}=\delta_{0}$ and%
\begin{equation}
A_{n}=\mu^{-1}A_{n-1}\ast\phi+\lambda\sum_{k=1}^{n}a_{k}B_{k}\ast A_{n-k},
\label{Def_SequA}%
\end{equation}
where $a_{n}=\int A_{n}\left(  x\right)  dx,$ and $A_{n}/a_{n}=C_{n}/c_{n}.$

In particular, note that the statement of Theorem \ref{Th_main} is equivalent
(given Proposition \ref{Th_Norming}) to bound $\left\vert A_{n}\left(
x\right)  -a_{n}\phi_{n\delta}\left(  x\right)  \right\vert $ in the same way,
and this is what we will do.

We define the following operator $\Psi$ on sequences of functions
$\mathbf{G}=\left\{  G_{n}\right\}  _{n\geq0}$, $G_{n}\in\mathcal{C}_{\ast
}\left(  \mathbb{R}^{d}\right)  ,\ $ $\Psi\left(  \mathbf{G}\right)
_{0}\overset{\mathrm{def}}{=}G_{0},$ and for $n\geq1$%
\[
\Psi\left(  \mathbf{G}\right)  _{n}\overset{\mathrm{def}}{=}a_{n}\phi
_{n\delta}\ast G_{0}-\sum_{j=1}^{n}G_{n-j}\ast\Delta_{j,j},
\]
with%
\begin{equation}
\Delta_{k,j}\overset{\mathrm{def}}{=}a_{j}\phi_{k\delta}-\mu^{-1}a_{j-1}%
\phi_{\left(  k-1\right)  \delta+1}-\lambda\sum_{m=1}^{j}a_{m}a_{j-m}B_{m}%
\ast\phi_{\left(  k-m\right)  \delta} \label{Def_Delta}%
\end{equation}
for $k\geq j.$ A resummation gives%
\[
\Psi\left(  \mathbf{G}\right)  _{n}=G_{n}-\sum_{j=1}^{n}a_{n-j}\phi_{\left(
n-j\right)  \delta}\ast\left[  G_{j}-\mu^{-1}\phi\ast G_{j-1}-\lambda
\sum_{m=1}^{j}a_{m}B_{m}\ast G_{j-m}\right]  .
\]

A crucial observation is that if $\mathbf{A}$ satisfies $A_{0}=\delta_{0}$ and
(\ref{Def_SequA}), then $\Psi\left(  \mathbf{A}\right)  =\mathbf{A}$, and vice
versa: If $A_{0}=\delta_{0},$ and $\mathbf{A}$ satisfies the fixed point
equation, then (\ref{Def_SequA}) follows by induction on $n.$

The main technical estimates are summarized in the following lemma which will
be proved in the next section

\begin{lemma}
\label{Le_MainEst}

\begin{enumerate}
\item[a)]
\begin{equation}
\sum_{j=1}^{n}\left\vert \Delta_{n,j}\right\vert \leq L\lambda f_{n},
\label{ML1}%
\end{equation}

\item[b)]
\begin{equation}
\left\vert \Delta_{n,n}\right\vert \leq L\lambda\kappa_{n}, \label{ML2}%
\end{equation}
where%
\begin{equation}
\kappa_{n}\overset{\mathrm{def}}{=}\sum_{s=0}^{\left[  n/2\right]  }\psi
_{s}\ast\Gamma_{n-s}+n^{-1}\bar{\zeta}_{n}\psi_{n}, \label{Chi_Def}%
\end{equation}

\item[c)]
\begin{equation}
\sum_{j=1}^{n}\kappa_{j}\ast f_{n-j}\leq Lf_{n}. \label{ML3}%
\end{equation}

\end{enumerate}
\end{lemma}

We proceed with the proof of Theorem \ref{Th_main}, assuming this lemma. Note
that on the one hand, if $\mathbf{E}$ is the sequence $\left\{  a_{n}%
\phi_{n\delta}\right\}  $ then $\Psi\left(  \mathbf{E}\right)  _{n}=E_{n}%
-\sum_{j=1}^{n}a_{n-j}\Delta_{n,j}.$ By Lemma \ref{Le_MainEst} a), we get that
$\Psi\left(  \mathbf{E}\right)  -\mathbf{E}\in\mathcal{B}_{\mathbf{f}}$ with
$\left\Vert \Psi\left(  \mathbf{E}\right)  -\mathbf{E}\right\Vert
_{\mathbf{f}}\leq L\lambda$. ($\mathbf{E}$ itself is of course not in
$\mathcal{B}_{\mathbf{f}}$).

On the other hand, if $\mathbf{G}\in\mathcal{B}_{\mathbf{f}},$ with $G_{0}=0,$
then for $n\geq1,$%
\[
\left\vert \Psi\left(  \mathbf{G}\right)  _{n}\left(  x\right)  \right\vert
\leq\left\Vert \mathbf{G}\right\Vert _{\mathbf{f}}\sum_{j=1}^{n}\left\vert
f_{n-j}\left(  x\right)  \Delta_{j,j}\left(  x\right)  \right\vert .
\]
By applying Lemma \ref{Le_MainEst} b) and c), we obtain that%
\[
\left\Vert \Psi\left(  \mathbf{G}\right)  \right\Vert _{\mathbf{f}}\leq
L\lambda\left\Vert \mathbf{G}\right\Vert _{\mathbf{f}}.
\]

Thus, since $\left(  \Psi\left(  \mathbf{E}\right)  -\mathbf{E}\right)
_{0}=0$, we conclude that for small enough $\lambda>0$, $\left\{  \Psi
^{n}\left(  \mathbf{E}\right)  -\mathbf{E}\right\}  $ is a Cauchy sequence in
$\mathcal{B}_{\mathbf{f}}$, and therefore converges, say to $\mathbf{Y}%
\in\mathcal{B}_{\mathbf{f}}$ which satisfies $\left\Vert \mathbf{Y}\right\Vert
_{\mathbf{f}}\leq L\lambda.$ Then%
\begin{align*}
\mathbf{Y}+\mathbf{E}-\Psi\left(  \mathbf{Y}+\mathbf{E}\right)   &  =\left[
\mathbf{Y}+\mathbf{E}-\Psi^{n}\left(  \mathbf{E}\right)  \right] \\
&  +\left[  \Psi^{n}\left(  \mathbf{E}\right)  -\Psi^{n+1}\left(
\mathbf{E}\right)  \right]  +\left[  \Psi^{n+1}\left(  \mathbf{E}\right)
-\Psi\left(  \mathbf{Y}+\mathbf{E}\right)  \right]  ,
\end{align*}
and all three expressions in square brackets on the right hand side converge
to $0$ in $\mathcal{B}_{\mathbf{f}}$. Therefore, $\mathbf{Y}+\mathbf{E}$ is a
fixed point of $\Psi$, which we know has to be $\mathbf{A}$. Therefore
$\left\Vert \mathbf{A}-\mathbf{E}\right\Vert _{\mathbf{f}}\leq L\lambda.$ So,
we have proved the theorem.

\subsection{Proof of Lemma \ref{Le_MainEst}\label{Subsect_Proof_MainLemma}}

We first recall some properties of the semigroup $\left\{  \phi_{t}\right\}
$. Of course, $\phi_{t}\left(  x\right)  =t^{-d/2}\phi\left(  x/\sqrt
{t}\right)  .$ We often write $\dot{\phi}_{t}$ for the derivative in $t,$ and
we write $\partial_{i}\phi_{t}$ for the partial derivatives in $x_{i},$ and
$\partial_{ij}^{2}\phi_{t}$ for the second partial derivatives, etc. We also
write $\Delta\phi_{t}\overset{\mathrm{def}}{=}\sum\nolimits_{i=1}^{d}%
\partial_{ii}^{2}\phi_{t},$ as usual. The heat equation gives $\dot{\phi}%
_{t}=\frac{1}{2}\Delta\phi_{t}.$ The partial derivatives in $x$ of $\phi$ are
of the form $p\phi$ for a polynomial $p$ in $x$ whose exact form is of no
concern for us. Here are some elementary properties we will use:

\begin{itemize}
\item If $t\leq s\leq2t$ then%
\begin{equation}
\phi_{t}\leq2^{d/2}\phi_{s}. \label{Phi1}%
\end{equation}

\item If $p$ is any polynomial in $x,$ then for any $\varepsilon>0,$ there
exists $C_{\varepsilon,p}>0$ such that%
\begin{equation}
\left\vert p\left(  x\right)  \right\vert \phi\left(  x\right)  \leq
C_{\varepsilon,p}\phi_{1+\varepsilon}\left(  x\right)  \label{Phi2}%
\end{equation}
implying%
\begin{equation}
\left\vert p\left(  x/\sqrt{t}\right)  \right\vert \phi_{t}\left(  x\right)
\leq C_{\varepsilon,p}\phi_{t\left(  1+\varepsilon\right)  }\left(  x\right)
. \label{Phi3}%
\end{equation}
From this, we see that for $\mathbf{k}=\left(  k_{1},\ldots,k_{d}\right)
\in\mathbb{N}_{0}^{d}$ with $\left\vert \mathbf{k}\right\vert =k_{1}%
+\cdots+k_{d}$%
\begin{equation}
\left\vert \frac{\partial^{\left\vert \mathbf{k}\right\vert }\phi_{t}\left(
x\right)  }{\partial x_{1}^{k_{1}}\cdots\partial x_{d}^{k_{d}}}\right\vert
\leq C_{\varepsilon,k}t^{-\left\vert \mathbf{k}\right\vert /2}\phi_{t\left(
1+\varepsilon\right)  }\left(  x\right)  , \label{Der1}%
\end{equation}
and for $k\in\mathbb{N}$%
\begin{equation}
\left\vert \frac{\partial^{k}\phi_{t}\left(  x\right)  }{\partial t^{k}%
}\right\vert \leq C_{\varepsilon,k}t^{-k}\phi_{t\left(  1+\varepsilon\right)
}\left(  x\right)  . \label{Der2}%
\end{equation}

\end{itemize}

Below, we use the convention $\sum_{m=a}^{b}=0$ if $b<a.$

\subsubsection{Proof of (\ref{ML1})}

Recall that $a_{n}=\mu^{-n}c_{n}$. Using (\ref{Sequ4}) and (\ref{Equ_sequ_a}),
we can rewrite $\Delta_{k,j}$ as%
\begin{align*}
\Delta_{k,j}  &  =\mu^{-1}a_{j-1}\left(  \phi_{k\delta}-\phi_{\left(
k-1\right)  \delta+1}\right)  -\lambda\sum_{m=1}^{j}a_{m}a_{j-m}\left(
B_{m}\ast\phi_{\left(  k-m\right)  \delta}-b_{m}\phi_{k\delta}\right) \\
&  =\Delta_{k,j}^{(1)}+\Delta_{k,j}^{(2)},
\end{align*}
where%
\[
\Delta_{k,j}^{(2)}\overset{\mathrm{def}}{=}-\lambda\sum_{m=\left[  k/2\right]
+1}^{j}a_{m}a_{j-m}\left(  B_{m}\ast\phi_{\left(  k-m\right)  \delta}%
-b_{m}\phi_{k\delta}\right)  .
\]
Note that $\Delta^{(1)}$ and $\Delta^{(2)}$ deal with small and large $m$,
respectively. As $\left\{  a_{m}\right\}  $ is bounded, we can estimate, using
$\left\vert b_{m}\right\vert \leq\gamma_{m},\ \left\vert B_{m}\right\vert
\leq\Gamma_{m},$%
\begin{equation}
\left\vert \Delta_{k,j}^{(2)}\right\vert \leq L\lambda\left[  \sum
_{s=k-j}^{\left[  k/2\right]  }\phi_{s\delta}\ast\Gamma_{k-s}+\phi_{k\delta
}\sum_{m=\left[  k/2\right]  +1}^{j}\gamma_{m}\right]  , \label{Est_Delta2}%
\end{equation}
where in the first summand on the rhs, we substituted $k-m=s.$ From that we
see that (\ref{ML1}) (and also (\ref{ML2})) hold for $\Delta^{(2)}$ instead of
$\Delta$ and so it remains to check the inequalities for $\Delta^{(1)}:$%

\begin{align*}
\Delta_{k,j}^{(1)}  &  =\alpha\left[  \mu^{-1}\left(  \phi_{k\delta}%
-\phi_{\left(  k-1\right)  \delta+1}\right)  -\lambda\sum_{m=1}^{j\wedge
\left[  k/2\right]  }a_{m}\left(  B_{m}\ast\phi_{\left(  k-m\right)  \delta
}-b_{m}\phi_{k\delta}\right)  \right] \\
&  +\mu^{-1}\left(  a_{j-1}-\alpha\right)  \left(  \phi_{k\delta}%
-\phi_{\left(  k-1\right)  \delta+1}\right) \\
&  -\lambda\sum_{m=1}^{j\wedge\left[  k/2\right]  }a_{m}\left(  a_{j-m}%
-\alpha\right)  \left(  B_{m}\ast\phi_{\left(  k-m\right)  \delta}-b_{m}%
\phi_{k\delta}\right)  ,\\
&  =X_{k,j}^{(1)}+X_{k,j}^{(2)}-X_{k,j}^{(3)},\ \mathrm{say.}%
\end{align*}
To estimate $X^{(1)}$, we use a Taylor approximation $\phi_{t}\left(
x\right)  $ in the $x$-variable up to fourth order. Remark that in the
expansion below, the odd contributions vanish due to the assumed symmetry of
the $B_{m}$ function, and in the second Taylor term, we replace $\frac{1}%
{2}\Delta\phi_{t}$ by $\dot{\phi}_{t}.$ $b_{m}$ and $\overline{b}_{m}$ are
defined by (\ref{bm}) and (\ref{Def_bm_bar}).%
\begin{align*}
\left(  B_{m}\ast\phi_{\left(  k-m\right)  \delta}\right)  \left(  x\right)
&  =b_{m}\phi_{\left(  k-m\right)  \delta}\left(  x\right)  +\overline{b}%
_{m}\dot{\phi}_{\left(  k-m\right)  \delta}\left(  x\right) \\
&  +\frac{1}{24}E_{\theta}\left(  \int\phi_{\left(  k-m\right)  \delta
}^{\left(  4\right)  }\left(  x-\theta y\right)  \left[  y^{4}\right]
B_{m}\left(  y\right)  dy\right)  ,
\end{align*}
where $E_{\theta}$ refers to an expectation under the probability measure with
density $4\left(  1-\theta\right)  ^{3}$ on $\left[  0,1\right]  .$
$\phi^{\left(  4\right)  }\left(  z\right)  \left[  y^{4}\right]  $ is the
fourth derivative of $\phi$ at $z$ in the direction $y.$ The third summand, we
estimate by (\ref{Gamma_Convol3}) and (\ref{Der1}), using $m<k/2:$%
\begin{align*}
&  \leq Lk^{-2}E_{\theta}\int\phi_{\left(  k-m\right)  \delta\left(
1+\varepsilon\right)  }\left(  x-\theta y\right)  \left\vert y\right\vert
^{4}\Gamma_{m}\left(  y\right)  dy\\
&  =Lk^{-2}E_{\theta}\theta^{-d}\int\phi_{\left(  k-m\right)  \delta\left(
1+\varepsilon\right)  /\theta^{2}}\left(  \frac{x}{\theta}-y\right)
\left\vert y\right\vert ^{4}\Gamma_{m}\left(  y\right)  dy\\
&  \leq Lk^{-2}\gamma_{m}^{\left(  2\right)  }E_{\theta}\theta^{-d}%
\phi_{\left(  k-m\right)  \delta\left(  1+\varepsilon\right)  /\theta^{2}%
+m}\left(  \frac{x}{\theta}\right) \\
&  =Lk^{-2}\gamma_{m}^{\left(  2\right)  }E_{\theta}\phi_{\left(  k-m\right)
\delta\left(  1+\varepsilon\right)  +m\theta^{2}}\left(  x\right) \\
&  \leq Lk^{-2}\gamma_{m}^{\left(  2\right)  }\psi_{k}\left(  x\right)
\end{align*}
as $\theta^{2}\leq\delta\left(  1+\varepsilon\right)  $ if $\lambda$ is small
enough (by (\ref{delta_bound})). Furthermore%
\begin{align*}
\overline{b}_{m}\dot{\phi}_{\left(  k-m\right)  \delta}  &  =\overline{b}%
_{m}\dot{\phi}_{k\delta}+O\left(  \gamma_{m}^{\left(  1\right)  }mk^{-2}%
\psi_{k}\right)  ,\\
b_{m}\phi_{\left(  k-m\right)  \delta}  &  =b_{m}\phi_{k\delta}-b_{m}%
m\delta\dot{\phi}_{k\delta}+O\left(  \gamma_{m}m^{2}k^{-2}\psi_{k}\right)  ,\\
\phi_{\left(  k-1\right)  \delta+1}  &  =\phi_{k\delta}+\left(  1-\delta
\right)  \dot{\phi}_{k\delta}+O\left(  k^{-2}\lambda^{2}\psi_{k}\right)  .
\end{align*}
So we get%
\[
X_{k,j}^{(1)}=\left[  \mu^{-1}\left(  1-\delta\right)  -\lambda\sum
_{m=1}^{j\wedge\left(  k/2\right)  }a_{m}\left(  \overline{b}_{m}-b_{m}%
m\delta\right)  \right]  \dot{\phi}_{k\delta}+O\left(  \lambda k^{-2}%
\zeta_{j\wedge\left[  k/2\right]  }^{\left(  1\right)  }\psi_{k}\right)  .
\]
The choice of $\delta$ was made such that the expression in square brackets is
$0$ if we extend the sum to $\infty.$ Therefore, the expression in square
brackets is in absolute value%
\[
\leq L\lambda\sum_{m\geq j\wedge\left(  k/2\right)  }\left(  \left\vert
\overline{b}_{m}\right\vert +m\left\vert b_{m}\right\vert \right)  \leq
L\lambda\sum_{m\geq j\wedge\left(  k/2\right)  }\left(  \gamma_{m}^{\left(
1\right)  }+m\gamma_{m}\right)  \leq L\lambda\zeta_{j\wedge\left[  k/2\right]
}^{\left(  2\right)  },
\]
and as $\left\vert \dot{\phi}_{k\delta}\right\vert \leq Lk^{-1}\psi_{k}$, we
get%
\begin{equation}
\left\vert X_{k,j}^{(1)}\right\vert \leq L\lambda\left\{  k^{-2}\zeta
_{j\wedge\left[  k/2\right]  }^{\left(  1\right)  }+k^{-1}\zeta_{j\wedge
\left[  k/2\right]  }^{\left(  2\right)  }\right\}  \psi_{k}. \label{Est_X1}%
\end{equation}

For $X^{(2)}$, we simply use $\phi_{\left(  k-1\right)  \delta+1}%
=\phi_{k\delta}+O\left(  \lambda k^{-1}\psi_{k}\right)  ,$ and Proposition
\ref{Th_Norming} c) to get%
\begin{equation}
\left\vert X_{k,j}^{(2)}\right\vert \leq L\lambda k^{-1}\zeta_{j}^{\left(
2\right)  }\psi_{k}, \label{Est_X2}%
\end{equation}
and in a similar fashion, we get%
\begin{equation}
\left\vert X_{k,j}^{(3)}\right\vert =L\lambda k^{-1}\zeta_{j\wedge\left[
k/2\right]  }^{\left(  2\right)  }\psi_{k}. \label{Est_X3}%
\end{equation}
Using these estimates for $X^{(1)},X^{(2)},X^{(3)},$ we get%
\begin{align*}
\sum_{j=1}^{n}\left\vert \Delta_{n,j}^{(1)}\right\vert  &  \leq L\lambda
\left\{  n^{-2}\sum_{j=1}^{n}\zeta_{j\wedge\left[  n/2\right]  }^{\left(
1\right)  }+n^{-1}\sum_{j=1}^{n}\zeta_{j\wedge\left[  n/2\right]  }^{\left(
2\right)  }\right\}  \psi_{n}\\
&  \leq L\lambda\left\{  n^{-2}\sum_{j=1}^{n}\zeta_{j}^{\left(  1\right)
}+n^{-1}\sum_{j=1}^{n}\zeta_{j}^{\left(  2\right)  }\right\}  \psi
_{n}=L\lambda\overline{\zeta}_{n}\psi_{n},
\end{align*}
i.e., the estimate (\ref{ML1}) for $\Delta^{(1)}$.

\subsubsection{Proof of (\ref{ML2})}%

\begin{equation}
\left\vert \Delta_{j,j}^{(1)}\right\vert \leq L\lambda\left\{  j^{-2}\zeta
_{j}^{\left(  1\right)  }+j^{-1}\zeta_{j}^{\left(  2\right)  }\right\}
\psi_{j}\leq\frac{L\lambda}{j}\overline{\zeta}_{j}\psi_{j}.
\label{Inequ_Delta0}%
\end{equation}
The first inequality is evident by (\ref{Est_X1})-(\ref{Est_X3}). To see the
second one, remark first that $\zeta_{j}^{\left(  2\right)  }$ is decreasing
in $j,$ and therefore $\zeta_{j}^{\left(  2\right)  }\leq\overline{\zeta}_{j}$
follows. It remains to prove $j^{-1}\zeta_{j}^{\left(  1\right)  }\leq
L\overline{\zeta}_{j}$ which is the same as to prove%
\begin{equation}
1+\sum_{i=0}^{2}\sum_{m=1}^{j}m^{2-i}\gamma_{m}^{\left(  i\right)  }\leq
L+Lj^{-1}\sum_{i=0}^{2}\sum_{m=1}^{j}\left(  j-m+1\right)  m^{2-i}\gamma
_{m}^{\left(  i\right)  }. \label{Inequ_Delta1}%
\end{equation}
If we restrict both sides to summations over $m\leq2j/3,$ the inequality is
evident. On the other hand, using the assumed monotonicity of the $\gamma
_{n}^{\left(  i\right)  }$ sequences, we have%
\begin{align*}
\sum_{m=2j/3}^{j}m^{2-i}\gamma_{m}^{\left(  i\right)  }  &  \leq j^{2-i}%
\sum_{m=2j/3}^{j}\gamma_{m}^{\left(  i\right)  }\leq j^{2-i}\sum
_{m=j/3}^{2j/3}\gamma_{m}^{\left(  i\right)  }\\
&  \leq27j^{-1}\sum_{m=j/3}^{2j/3}\left(  j-m+1\right)  m^{2-i}\gamma
_{m}^{\left(  i\right)  }.
\end{align*}

As we had $\left\vert \Delta_{j,j}^{(2)}\right\vert \leq\frac{L\lambda}%
{j}\overline{\zeta}_{j}\psi_{j}$ already by (\ref{Est_Delta2}), the proof is complete.

\subsubsection{Proof of (\ref{ML3})}

Recall (\ref{Def_fn}), and write $f_{n}=f_{n}^{\left(  1\right)  }%
+f_{n}^{\left(  2\right)  }$ where $f_{n}^{(1)}$ is the first of the two
summands, and $f_{n}^{(2)}$ the second. We similarly split $\kappa_{n}%
=\kappa_{n}^{(1)}+\kappa_{n}^{\left(  2\right)  }$.

Using \eqref{Gamma_Convol1}, estimate
\begin{align*}
\sum_{j=1}^{n}\kappa_{j}^{(1)}\ast f_{n-j}^{(1)}  &  =\sum_{j=1}^{n-1}%
\sum_{s=0}^{\left[  j/2\right]  }\sum_{t=1}^{\left[  \left(  n-j\right)
/2\right]  }t\left(  \psi_{s}\ast\psi_{t}\right)  \ast\left(  \Gamma_{j-s}%
\ast\Gamma_{n-j-t}\right) \\
&  \leq L\sum_{j=1}^{n-1}\sum_{s=0}^{\left[  j/2\right]  }\sum_{t=1}^{\left[
\left(  n-j\right)  /2\right]  }t\chi_{n-s-t}\left(  j-s\right)  \left(
\psi_{s+t}\ast\Gamma_{n-s-t}\right) \\
&  \leq L\sum_{r=1}^{\left[  n/2\right]  }\rho\left(  r\right)  \left(
\psi_{r}\ast\Gamma_{n-r}\right)
\end{align*}
with%
\[
\rho\left(  r\right)  \overset{\mathrm{def}}{=}\sum_{j=1}^{n-1}\sum
_{s=0\vee\left(  r-\left[  \left(  n-j\right)  /2\right]  \right)  }^{\left[
j/2\right]  \wedge\left(  r-1\right)  }\left(  r-s\right)  \chi_{n-r}\left(
j-s\right)  \leq r\sum_{k=1}^{n-r-1}\alpha_{n,r}\left(  k\right)  \chi
_{n-r}\left(  k\right)  ,
\]
with%
\[
\alpha_{n,r}\left(  k\right)  \overset{\mathrm{def}}{=}\#\left\{  \left(
j,s\right)  :1\leq j\leq n-1,\ j-s=k,\ 0\vee\left(  r-\left[  \left(
n-j\right)  /2\right]  \right)  \leq s\leq\left[  j/2\right]  \wedge\left(
r-1\right)  \right\}  .
\]
It is elementary to check that $\alpha_{n,r}\left(  k\right)  \leq\min\left(
k,2\left(  n-r-k\right)  \right)  ,$ which implies by (\ref{Sequ1})
$\rho\left(  r\right)  \leq2K_{1}r$, so we get%
\begin{equation}
\sum_{j=1}^{n}\kappa_{j}^{(1)}\ast f_{n-j}^{(1)}\leq Lf_{n}. \label{MR1}%
\end{equation}

We next estimate
\begin{equation}
\sum_{j=1}^{n}\kappa_{j}^{(1)}\ast f_{n-j}^{(2)}=\sum_{j=1}^{n}\sum
_{s=0}^{\left[  j/2\right]  }\overline{\zeta}_{n-j}\Gamma_{j-s}\ast
\psi_{n-j+s}. \label{MR3}%
\end{equation}
For the summands with $j-s\leq\left[  n/2\right]  $ we have by
(\ref{Gamma_Convol3}) $\Gamma_{j-s}\ast\psi_{n-j+s}\leq L\gamma_{j-s}\ast
\psi_{n}$ and by (\ref{Bound_zetabar}),\ as $n-j\geq n/4,$ we have
$\overline{\zeta}_{n-j}\leq L\overline{\zeta}_{n}$. So we get for this part of
the sum on the rhs%
\[
\leq L\overline{\zeta}_{n}\psi_{n}\sum_{j=1}^{n}\sum_{s:s\leq\left[
j/2\right]  ,\ j-s\leq\left[  n/2\right]  }\gamma_{j-s}\leq L\overline{\zeta
}_{n}\psi_{n}.
\]
For the summands on the rhs of (\ref{MR3}) with $j-s>\left[  n/2\right]  ,$ we
get, by substituting $k$ for $n-j+s$, that it is $\leq\sum_{k=1}^{\left[
n/2\right]  }\left[  \sum_{s\leq\left(  n-k\right)  /2}\right]  \overline
{\zeta}_{k-s}\psi_{k}\Gamma_{n-k}\leq\sum_{k=1}^{\left[  n/2\right]  }%
k\psi_{k}\Gamma_{n-k}$, so that we have proved%
\begin{equation}
\sum_{j=1}^{n}\kappa_{j}^{(1)}\ast f_{n-j}^{(2)}\leq Lf_{n}. \label{MR2}%
\end{equation}

We next prove
\begin{equation}
\sum_{j=1}^{n}\kappa_{j}^{(2)}\ast f_{n-j}^{(1)}\leq Lf_{n}. \label{MR4}%
\end{equation}

\[
\sum_{j=1}^{n}\kappa_{j}^{(2)}\ast f_{n-j}^{(1)}=\sum_{j=1}^{n-1}\frac
{\bar{\zeta}_{j}}{j}\sum_{s=1}^{\left[  \left(  n-j\right)  /2\right]
}s\left[  \psi_{j+s}\ast\Gamma_{n-j-s}\right]  .
\]
We split $Q\overset{\mathrm{def}}{=}\left\{  \left(  j,s\right)  :1\leq j\leq
n-1,\ 1\leq s\leq\left[  \left(  n-j\right)  /2\right]  \right\}  $ into the
part $Q_{1}$ with $j+s\leq n/2,$ the part $Q_{2}$ with $n/2<j+s\leq3n/4$, and
the part $Q_{3}$ with $j+s>3n/4.$ On $Q_{2}\cup Q_{3}$ we again use
(\ref{Gamma_Convol3}) and estimate $\psi_{j+s}\ast\Gamma_{n-j-s}\leq
L\gamma_{n-j-s}\psi_{n}$. On $Q_{3}$, we must have $j\geq n/4,$ and therefore%
\[
\sum_{Q_{3}}\frac{\bar{\zeta}_{j}}{j}s\gamma_{n-j-s}\leq L\frac{\bar{\zeta
}_{n}}{n}\sum_{Q_{3}}s\gamma_{n-j-s}\leq L\bar{\zeta}_{n}.
\]%
\begin{align*}
\sum_{Q_{2}}\frac{\bar{\zeta}_{j}}{j}s\gamma_{n-j-s}  &  \leq L\gamma_{n}%
\sum_{Q_{2}}\frac{\bar{\zeta}_{j}}{j}\leq Ln\gamma_{n}\sum_{j=1}^{\infty}%
\frac{\bar{\zeta}_{j}}{j}\\
&  \leq Ln\gamma_{n}\leq L\bar{\zeta}_{n},
\end{align*}
the last inequality by (\ref{Bound_Gamman_by_zetabar}). Finally,%
\begin{align*}
\sum_{Q_{1}}\frac{\bar{\zeta}_{j}}{j}s\left[  \psi_{j+s}\ast\Gamma
_{n-j-s}\right]   &  =\sum_{k=1}^{\left[  n/2\right]  }\psi_{k}\ast
\Gamma_{n-k}\sum_{Q_{1}\cap\left\{  \left(  j,s\right)  :j+s=k\right\}  }%
\frac{\bar{\zeta}_{j}}{j}s\\
&  \leq L\sum_{k=1}^{\left[  n/2\right]  }k\left(  \psi_{k}\ast\Gamma
_{n-k}\right)  .
\end{align*}
Therefore, we have proved (\ref{MR4}).

Finally, it remains to investigate
\[
\sum_{j=1}^{n}\kappa_{j}^{(2)}\ast f_{n-j}^{(2)}=\psi_{n}\sum_{j=1}^{n}%
\frac{\overline{\zeta}_{j}}{j}\overline{\zeta}_{n-j}.
\]
The summation over $j\leq n/2$ is $\leq L\overline{\zeta}_{n}\sum_{j}%
\overline{\zeta}_{j}/j\leq L\overline{\zeta}_{n}$ by (\ref{Bound_zetabar}),
and the summation over $j>n/2$ is $\leq\left(  \overline{\zeta}_{n}/n\right)
\sum_{j\leq n}\overline{\zeta}_{j}\leq\overline{\zeta}_{n}\sum_{j}\left(
\overline{\zeta}_{j}/j\right)  \leq L\overline{\zeta}_{n}.$ Therefore%
\begin{equation}
\sum_{j=1}^{n}\kappa_{j}^{(2)}\ast f_{n-j}^{(2)}\leq Lf_{n}. \label{MR5}%
\end{equation}
Combining (\ref{MR1}),\ (\ref{MR2}), (\ref{MR4}), and (\ref{MR5}) proves the claim.

\section{Application to weakly self-avoiding walks: Proof of
Theorem\ref{Th_main_SAW} \label{Sect_SAW}}

We choose an $\varepsilon$ with $0<\varepsilon\leq1/100$ which will be fixed
through the rest of this section.

We derive Theorem \ref{Th_main_SAW} by applying the main Theorem \ref{Th_main}
with%
\begin{equation}
\Gamma_{n}\overset{\mathrm{def}}{=}Kn^{-d/2}\sum_{k=1}^{n}k^{1-d/2}\phi
_{2k/5}, \label{Def_Gamma_SAW}%
\end{equation}
with%
\begin{equation}
K\overset{\mathrm{def}}{=}8\mathrm{e}^{5/4}\left(  1+\frac{3}{2}\left(
1+\frac{1}{100}\right)  ^{d/2}\right)  \label{K_choice}%
\end{equation}

Let us first show that this $\Gamma_{n}$ satisfies B1-B4 in Condition
\ref{Cond_Main}:

\begin{lemma}
If $d\geq5,$ then the sequence $\left\{  \Gamma_{n}\right\}  $ defined in
\eqref{Def_Gamma_SAW} satisfies B1-B4 from Condition \ref{Cond_Main}.
\end{lemma}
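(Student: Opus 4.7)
The plan is to verify the four conditions B1--B4 in turn, with essentially all of the analytic work concentrated in B1.

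B4 is a direct computation. Using the standard Gaussian moment identity $\int|y|^{2\ell}\phi_t(y)\,dy = C_{d,\ell}\,t^{\ell}$, one obtains $\gamma^{(\ell)}(n) = C_{d,\ell}\,K\,n^{-d/2}\sum_{k=1}^{n}k^{1-d/2+\ell}$. A case check in $d\geq 5$ then shows that each of $n\gamma^{(0)}(n)$, $\gamma^{(1)}(n)$, and $n^{-1}\gamma^{(2)}(n)$ is summable (the borderline dimension $d=6$ contributes only a logarithmic factor in the middle sum). For B2, a term-by-term comparison suffices: if $t\leq s\leq 2t$, every Gaussian summand $k^{1-d/2}\phi_{2k/5}$ appearing in $\Gamma_s$ already occurs in $\Gamma_{2t}$, and the prefactor ratio satisfies $s^{-d/2}/(2t)^{-d/2}\leq 2^{d/2}$, so $K_2=2^{d/2}$.

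For B3, I expand $\Gamma_m$ in its Gaussian components and handle each integral
\[
I_k(x) \;=\; \int\phi_t(x-y)|y|^{2\ell}\phi_{2k/5}(y)\,dy
\]
separately. Writing $s=2k/5$ and recognising the integrand as the joint density of $(X,Y)$ with $Y\sim\phi_s$ and $X-Y\sim\phi_t$ independent, I factor $I_k(x) = \phi_{t+s}(x)\cdot\mathbb{E}[|Y|^{2\ell}\mid X=x]$; the conditional law is Gaussian with mean $xs/(t+s)$ and variance at most $s$, so $\mathbb{E}[|Y|^{2\ell}\mid X=x]\leq C\bigl((s/(t+s))^{2\ell}|x|^{2\ell}+s^{\ell}\bigr)$. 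The polynomial factor $|x|^{2\ell}\phi_{t+s}(x)$ is bounded by maximising $|x|^{2\ell}e^{-c|x|^2}$ with $c = (m-s)/(2(t+s)(t+m))$; the crucial input is $s\leq 2m/5$ (forced by $k\leq m$), which gives $m-s\geq 3m/5$ and hence $c\geq 3m/(40t^2)$. This yields $|x|^{2\ell}\phi_{t+s}(x)\leq C(t^2/m)^{\ell}\phi_{t+m}(x)$, after which $(s/(t+s))^{2\ell}(t^2/m)^{\ell}\leq s^{2\ell}/m^{\ell}\leq s^{\ell}$; the $s^{\ell}$-term itself is handled by $\phi_{t+s}\leq 2^{d/2}\phi_{t+m}$. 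Summing in $k$ reassembles the required bound $\leq C\,\gamma^{(\ell)}(m)\phi_{t+m}(x)$.

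The serious step is B1. My approach is to first derive sharp pointwise bounds for $\Gamma_n$ itself. Inserting $\phi_{2k/5}(x) = C\,k^{-d/2}e^{-5|x|^2/(4k)}$ into the defining sum and carrying out a Laplace/saddle-point analysis (the dominant index sits near $k\asymp|x|^2$) yields
\[
\Gamma_n(x)\;\leq\;C\,n^{-d/2}\bigl(1\wedge|x|^{4-2d}\bigr)\qquad\text{for }|x|\leq\sqrt{n},
\]
together with $\Gamma_n(x)\leq C\,n^{2-3d/2}|x|^{-2}e^{-c|x|^2/n}$ for $|x|>\sqrt{n}$. Splitting into the regions $|x|\leq 1$, $1\leq|x|\leq\sqrt{m\wedge n}$, $\sqrt{m\wedge n}\leq|x|\leq\sqrt{m\vee n}$, and $|x|\geq\sqrt{m\vee n}$, I check case-by-case that $\Gamma_m(x)\Gamma_n(x)/\Gamma_{m+n}(x)\leq C(m\wedge n)^{-d/2}$; the dominant contribution is already at $x=0$, where the ratio behaves like $(m+n)^{d/2}/(mn)^{d/2}\asymp(m\wedge n)^{-d/2}$. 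This suggests
\[
\chi_{m+n}(m)\;=\;C(m\wedge n)^{-d/2},
\]
which is automatically symmetric under $m\leftrightarrow n$, and the summability $\sum_{s=1}^{N-1}(s\wedge(N-s))\chi_N(s)\leq 2C\sum_{j=1}^{\lfloor N/2\rfloor}j^{1-d/2}<\infty$ holds uniformly in $N$ for $d\geq 5$ since $1-d/2\leq -3/2$. The delicate step will be the matching in the intermediate regime $|x|\sim\sqrt{m\wedge n}$, where the polynomial decay of $\Gamma_{m\wedge n}$ transitions into its Gaussian tail; this is a direct calculation rather than a conceptual obstacle, but requires careful bookkeeping.
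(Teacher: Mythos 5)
Your B1 argument rests on a misreading of the paper's notation. The paper declares early on that it drops $\ast$ for convolution, so that $AB$ means $A\ast B$; accordingly, condition~(\ref{Gamma_Convol1}) is the \emph{convolution} inequality $(\Gamma_m\ast\Gamma_n)(x)\le\chi_{m+n}(m)\,\Gamma_{m+n}(x)$, not a bound on the pointwise product $\Gamma_m(x)\Gamma_n(x)$. You can see this both from how B1 is used later (e.g.\ in Lemma~\ref{Le_Main}, where $\psi_s\Gamma_{j-s}\cdot\psi_t\Gamma_{n-j-t}$ is regrouped using commutativity and the Gaussian semigroup law $\psi_s\ast\psi_t=\psi_{s+t}$) and from the fact that integrating B1 immediately yields~(\ref{bound_gamma}), which needs $\int\Gamma_m\ast\Gamma_n=\gamma_m\gamma_n$. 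Your plan to prove a pointwise-product bound via Laplace/saddle-point asymptotics for $\Gamma_n(x)$, with a case split over four ranges of $|x|$, is therefore establishing the wrong statement; it is also much heavier than necessary, and you flag that the matching in the intermediate regime is not actually carried out. The intended proof is a one-liner: since $\phi_{2k/5}\ast\phi_{2l/5}=\phi_{2(k+l)/5}$,
\[
\Gamma_m\ast\Gamma_n
= K^2\Bigl(\tfrac{m+n}{mn}\Bigr)^{d/2}(m+n)^{-d/2}\sum_{t=2}^{m+n}\Bigl(\sum_{k=1}^{t-1}\bigl(k(t-k)\bigr)^{1-d/2}\Bigr)\phi_{2t/5},
\]
and for $d\ge5$ the inner sum $\sum_{k=1}^{t-1}(k(t-k))^{1-d/2}$ is bounded uniformly in $t$, giving $\chi_{m+n}(m)=L\bigl((m+n)/(mn)\bigr)^{d/2}\asymp(m\wedge n)^{-d/2}$ and~(\ref{Sequ1}) since $\sum_s(s\wedge(n-s))^{1-d/2}<\infty$. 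It is a coincidence of the Gaussian structure that your $\chi$ has the same form; the object you bounded is not the one B1 asks about.

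Your treatments of B2 and B4 are correct (B2 is genuinely pointwise, and the term-by-term comparison is exactly right). Your B3 argument, via conditioning and bounding the conditional moment of $Y$ given $X$, is sound but considerably more involved than needed: the paper simply uses $|y|^{2k}\phi_j(y)\le Lj^k\phi_{3j/2}(y)$ together with the semigroup law and~(\ref{Phi1}), which turns B3 into a two-line computation. You should still double-check, in your version, that the prefactor $\bigl((t+m)/(t+s)\bigr)^{d/2}$ arising from comparing $\phi_{t+s}$ with $\phi_{t+m}$ is absorbed into the constant -- it is, since $s\le 2m/5$ and $m\le t$ give $t+m\le\tfrac52(t+s)$ -- but noting this explicitly would make the bound airtight.
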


\begin{proof}
B2 and B4 are readily checked.

B1:
\begin{align*}
\Gamma_{n}\ast\Gamma_{m}  &  =K^{2}\left(  nm\right)  ^{-d/2}\sum_{k\leq
n}\sum_{l\leq m}\left(  kl\right)  ^{1-d/2}\phi_{2\left(  k+l\right)  /5}\\
&  =K^{2}\left(  \frac{n+m}{nm}\right)  ^{d/2}\left(  n+m\right)  ^{-d/2}%
\sum_{t=2}^{n+m}\left(  \sum_{k=1}^{t-1}\left(  k\left(  t-k\right)  \right)
^{1-d/2}\right)  \phi_{2t/5}\\
&  \leq C\left(  d\right)  \left(  \frac{n+m}{nm}\right)  ^{d/2}\Gamma_{n+m},
\end{align*}
for some constant $C\left(  d\right)  >0$ depending only on $d,$ which proves
B1. Note that the last inequality holds only when $d\geq5$.

B3: \ We use the fact that $\left\vert y\right\vert ^{2k}\phi_{j}\leq
Lj^{k}\phi_{3j/2}$ for $j\in\mathbb{N}$ and $k=0,1,2.$ Therefore, we have for
$m\leq t$%
\begin{align*}
\int\phi_{t}\left(  \cdot-y\right)  \left\vert y\right\vert ^{2k}\Gamma
_{m}\left(  y\right)  dy  &  \leq C\left(  d\right)  m^{-d/2}\sum_{j=1}%
^{m}j^{1-d/2+k}\phi_{t+3j/5}\\
&  \leq C\left(  d\right)  \phi_{t+m}m^{-d/2}\sum_{j=1}^{m}j^{1-d/2+k}\leq
L\gamma_{m}^{\left(  k\right)  }\phi_{t+m}.
\end{align*}

\end{proof}

We keep our convention of the last section concerning the constant $L$.
However, as we have chosen $\varepsilon$ fixed, and a concrete $\Gamma$ which
specifies $K_{1}-K_{6},$ depending only on the dimension $d\geq5$, $L$ now
depends only on the dimension $d$.

With this choice of $\Gamma$, we have $\bar{\zeta}_{n}=O\left(  r_{n}\right)
$ , where $r_{n}$ is defined in (\ref{Def_rn}), and therefore the bound in
Theorem \ref{Th_main} is%
\begin{align}
&  L\left[  \sum_{s=1}^{\left[  n/2\right]  }s\left(  \phi_{s\delta\left(
1+\varepsilon\right)  }\ast\left(  n-s\right)  ^{-d/2}\sum_{k=1}%
^{n-s}k^{1-d/2}\phi_{2k/5}\right)  \left(  x\right)  +r_{n}\phi_{n\delta
\left(  1+\varepsilon\right)  }\left(  x\right)  \right]  \nonumber\\
&  \leq L\left[  n^{-d/2}\sum_{s=1}^{\left[  n/2\right]  }s\left(
\phi_{s\delta\left(  1+\varepsilon\right)  }\ast\sum_{k=1}^{n-s}k^{1-d/2}%
\phi_{2k/5}\right)  \left(  x\right)  +r_{n}\phi_{n\delta\left(
1+\varepsilon\right)  }\left(  x\right)  \right]  \label{Resum}\\
&  \leq L\left[  n^{-d/2}\sum_{s=1}^{\left[  n/2\right]  }s\phi_{s\delta
\left(  1+\varepsilon\right)  }\left(  x\right)  +r_{n}\phi_{n\delta\left(
1+\varepsilon\right)  }\left(  x\right)  \right]  ,\nonumber
\end{align}
the last inequality provided%
\begin{equation}
\delta\left(  1+\varepsilon\right)  \geq4/5,\label{Restriction_epsilon}%
\end{equation}
which is achieved by choosing $\lambda$ small enough. To see the second
inequality in (\ref{Resum}), we sum $sk^{1-d/2}\phi_{s\delta\left(
1+\varepsilon\right)  +2k/5}$ over $s,k$ satisfying $s\delta\left(
1+\varepsilon\right)  +2k/5\in(s^{\prime}-1,s^{\prime}]\delta\left(
1+\varepsilon\right)  $, estimate $\phi_{s\delta\left(  1+\varepsilon\right)
+2k/5}$ by $L\phi_{s^{\prime}\delta\left(  1+\varepsilon\right)  }$, and
finally sum over $s^{\prime}.$ This leads to%
\[
L\sum_{s^{\prime}}s^{\prime}\phi_{s^{\prime}\delta\left(  1+\varepsilon
\right)  }\left(  x\right)
\]
but the summation extends beyond $\left[  n/2\right]  .$ However, the sum over
$s^{\prime}>\left[  n/2\right]  $ can be estimated by $Ln^{d/2}r_{n}%
\phi_{n\delta\left(  1+\varepsilon\right)  }\left(  x\right)  $ provided all
the $s^{\prime}$ are $\leq n\delta\left(  1+\varepsilon\right)  $ which is
guranteed by (\ref{Restriction_epsilon}).

In order to prove Theorem \ref{Th_main_SAW} we have to show that the
connectivity function in \eqref{Def_C_SAW} satisfies the recursion in
\eqref{recursion}. This is done in Section \ref{Subsect_lace}. Finally, we
have to show that the $B_{n}$'s, defined through $\Pi_{n}=\lambda
c_{n}^{\mathrm{SAW}}B_{n}$, are bounded from above by the $\Gamma_{n}$
sequence in \eqref{Def_Gamma_SAW}. This is the content of Section
\ref{Subsect_Lacebound}.

There is nothing mysterious in our choice of $\left\{  \Gamma_{n}\right\}  $:
Simply \textit{assume }that a (near) local CLT is correct. Then estimating the
$B_{n}$ for WSAW from the lace expansion, immediately leads to an estimate
$\left\vert B_{n}\right\vert \leq\Gamma_{n}$. On the other hand, $\left\vert
B_{n}\right\vert \leq\Gamma_{n}$ implies a (near) local CLT. There is
sufficient \textquotedblleft contraction\textquotedblright\ in this circle to
make it work.

\subsection{Definition of the Lace Functions and recursion for
WSAW\label{Subsect_lace}}

This section contains standard material on the lace expansion adapted to the
model in continuous space.

Given an interval $I=[a,b]\subset\mathbb{Z}$ of integers with $0\leq a\leq b
$, we refer to a pair $\{s,t\}$ ($s<t$) of elements of $I$ as an \emph{edge}.
To abbreviate the notation, we write $st$ for $\{s,t\}$. A set of edges is
called a \emph{graph}. A graph $\Gamma$ on $[a,b]$ is said to be
\emph{connected} if both $a$ and $b$ are endpoints of edges in $\Gamma$ and
if, in addition, for any $c\in\lbrack a,b]$ there is an edge $st\in\Gamma$
such that $s<c<t$. Note that this is \textit{not }in agreement with the usual
notion of connectedness in graph theory. The set of all graphs on $[a,b]$ is
denoted by $\mathcal{B}[a,b]$, and the subset consisting of all connected
graphs is denoted by $\mathcal{G}[a,b]$. A \emph{lace} is a minimally
connected graph, that is, a connected graph for which the removal of any edge
would result in a disconnected graph. The set of laces on $[a,b]$ is denoted
by $\mathcal{L}[a,b]$, and the set of laces on $[a,b]$ consisting of exactly
$N$ edges is denoted by $\mathcal{L}^{(N)}[a,b]$.

A lace $\ell=\left\{  s_{1}t_{1},\ldots,s_{N}t_{N}\right\}  $ on $\left[
0,n\right]  $, with $s_{1}=0,\ t_{N}=n,$ satisfies $s_{i}<t_{i-1}%
,\ i=2,\ldots,N,$ and $t_{i}\leq s_{i+2},\ i=1,\ldots,N-2.$ We can describe
the lace by the interdistances $m_{1},\ldots,m_{2N-1}$ between the points
$s_{i},t_{i}$ ordered increasingly, $s_{1}=0<s_{2}<t_{1}\leq s_{3}<t_{2}%
\cdots,$ i.e. $m_{1}=s_{2},\ m_{2}=t_{1}-s_{2},$ etc. Then of course
$\sum_{i=1}^{2N-1}m_{i}=n.$ We switch freely between the $s_{i}$-$t_{i}%
$-representation of the lace and the representation by the $m_{i}$, without
special notice. The restrictions on the $m_{i}$ are $m_{i}>0$ for $i$ even and
$m_{i}\geq0$ for $i$ odd, with the additional restriction at the boundary
$m_{1}>0$ and $m_{2N-1}>0.$ (For $N=2,$ all the $m_{i}$ are positive). It is
customary the visualize the laces as graphs by identifying the vertices
connected by a bond. Below the example of a lace with $N=4.$

\medskip

\begin{figure}[h]
\centering
\includegraphics[width=0.7\linewidth,clip]{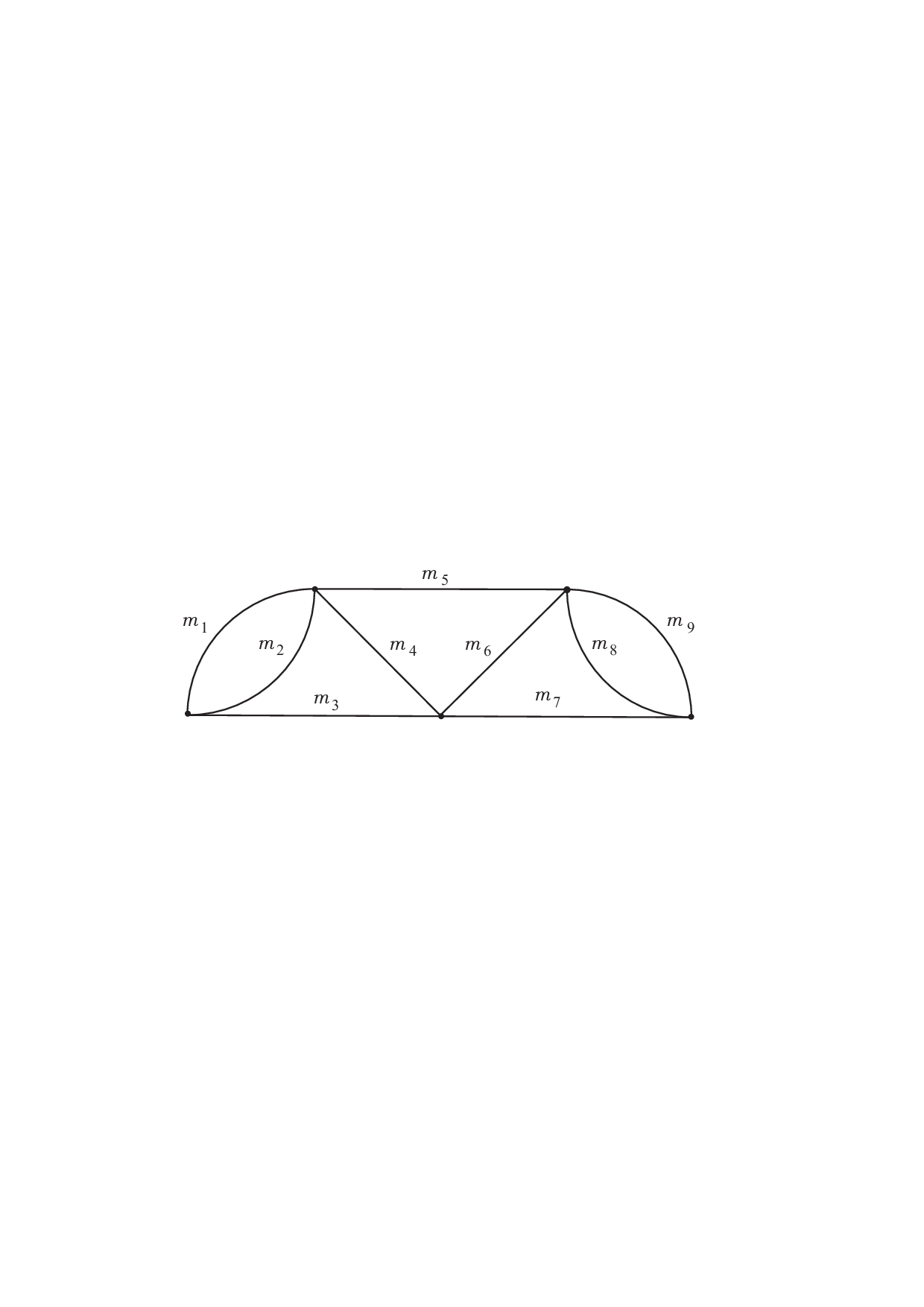}
\end{figure}

\medskip

The \textquotedblleft basic\textquotedblright\ $N$-lace is the graph%
\begin{equation}
\ell_{N}^{0}\overset{\mathrm{def}}{=}\left\{  \left(  0,2\right)  ,\left(
1,4\right)  ,\left(  3,6\right)  ,\ldots,\left(  2N-5,2N-2\right)  ,\left(
2N-3,2N-1\right)  \right\}  \label{Def_lN0}%
\end{equation}
on $\left\{  0,\ldots,2N-1\right\}  .$ We will write $b_{i}=\left(
\underline{i},\overline{i}\right)  $ for the $i$-th bond in this graph, i.e.
$\underline{1}=0,$ and $\underline{i}=2i-3$ for $i=1,\ldots,N,$ $\overline
{i}=2i$ for $i\leq N-1$, and $\overline{N}=2N-1.$ Conversely, for
$i=0,\ldots,2N-1$, we write $\beta\left(  i\right)  \in\left\{  1,\ldots
,N\right\}  $ for the unique element with $i\in\left\{  \underline
{\beta\left(  i\right)  },\overline{\beta\left(  i\right)  }\right\}  ,$ i.e.
$\beta\left(  0\right)  =1,\ \beta\left(  1\right)  =2,$ etc. With this
notation, we have for a lace in $\mathcal{L}^{(N)}[a,b]$%
\[
s_{i}=\sum_{j=1}^{\underline{i}}m_{j},\ t_{i}=\sum_{j=1}^{\overline{i}}m_{j}.
\]

If $G=\left\{  G_{t}\right\}  _{t>0}$ is any family of functions in
$\mathcal{C}_{\ast}^{+}\left(  \mathbb{R}^{d}\right)  $, augmented by
$G_{0}=\delta_{0}$, and $\ell\in\mathcal{L}^{\left(  N\right)  }\left[
0,n\right]  ,$ we write with $x_{0}=0,\ x_{2N-1}=x$,%
\begin{equation}
\Xi_{\ell}\left(  G,\rho\right)  \left(  x\right)  \overset{\mathrm{def}}%
{=}\int dx_{1}\cdots dx_{2N-2}\prod\limits_{i=1}^{2N-1}G_{m_{i}}\left(
x_{i}-x_{i-1}\right)  \prod\limits_{i=1}^{n}\mathbb{I}_{\rho}\left(
x_{\overline{i}}-x_{\underline{i}}\right)  . \label{Def_Xi}%
\end{equation}
For the moment, we need $G$ only for integer $m$, but the more general
situation is needed below.

Given a connected graph $\Gamma$ on $\left[  a,b\right]  $, the following
prescription associates to $\Gamma$ a unique lace $\ell_{\Gamma}$. The lace
consists of edges $s_{1}t_{1},s_{2}t_{2},\dots$, with $t_{1}$, $s_{1}$,
$t_{2}$, $s_{2}$, $\dots$ determined (in that order) by%

\begin{align*}
t_{1}  &  = \max\{t:\, at\in\Gamma\}, & s_{1}  &  = a,\\
t_{i+1}  &  = \max\{t:\, \exists s< t_{i}\,\text{ such that }st\in\Gamma\}, &
s_{i+1}  &  = \min\{s:\, st_{i+1}\in\Gamma\}.
\end{align*}

Given a lace $\mathcal{\ell}$, the set of all edges $st\notin\mathcal{\ell}$
such that $\mathcal{\ell}_{\mathcal{\ell}\cup\{st\}}=\mathcal{\ell}$ is
denoted by $\mathcal{C}(\mathcal{\ell})$. Edges in $\mathcal{C}(\mathcal{\ell
})$ are said to be \emph{compatible} with $\mathcal{\ell}$. With this
formalism, we can expand the product in (\ref{Def_K}), obtaining%

\begin{align}
\label{Kab}K_{\lambda,\rho}\left[  a,b\right]  \left(  \mathbf{x}\right)
=\sum_{\Gamma\in\mathcal{B}[a,b]} \prod_{st\in\Gamma}\left(  -\lambda
U_{st}^{\rho}\left(  \mathbf{x}\right)  \right)  .
\end{align}

We also define an analogous quantity, in which the sum over graphs is
restricted to connected graphs, namely,
\begin{align}
\label{JDef}J[a,b]\left(  \mathbf{x}\right)  \overset{\mathrm{def}}{=}%
\sum_{\Gamma\in\mathcal{G}[a,b]} \prod_{st\in\Gamma}\left(  -\lambda
U_{st}^{\rho}\left(  \mathbf{x}\right)  \right)  .
\end{align}

Recalling (\ref{Def_Phi}), this allows us to define the \emph{lace functions},
which are the key quantities in the lace expansion:
\begin{equation}
\Pi_{n}(x_{n})\overset{\mathrm{def}}{=}\int J[0,n]\left(  \mathbf{x}\right)
\Phi\left[  0,n\right]  \left(  \mathbf{x}\right)  \prod_{i=1}^{n-1}dx_{i}
\label{Pimdef}%
\end{equation}
for any $n\geq1$ and $x_{n}\in\mathbb{R}^{d}$. The identity (\ref{recursion})
is shown in the following lemma.

\begin{lemma}
[Convolution equation for WSAW]\label{lem-recursion} For $n\geq1$,
\[
C_{n}^{\mathrm{SAW}}=C_{n-1}^{\mathrm{SAW}}\ast\phi+\sum_{k=1}^{n}\Pi_{k}\ast
C_{n-k}^{\mathrm{SAW}}.
\]

\end{lemma}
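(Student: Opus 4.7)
The plan is to reduce the recursion to a pointwise algebraic identity on $K_{\lambda,\rho}[0,n]$ and then integrate. Specifically, I will prove
$$K_{\lambda,\rho}[0,n](\mathbf{x}) \;=\; K_{\lambda,\rho}[1,n](\mathbf{x}) \;+\; \sum_{k=1}^n J[0,k](\mathbf{x})\,K_{\lambda,\rho}[k,n](\mathbf{x})$$
for every $\mathbf{x}\in(\mathbb{R}^d)^n$ (with the convention $K_{\lambda,\rho}[k,k]\equiv 1$); once this is in hand, multiplying by $\Phi[0,n](\mathbf{x})$ and integrating over $dx_1\cdots dx_{n-1}$ produces the claimed recursion.

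For the identity, I use the expansion \eqref{Kab} and split graphs $\Gamma\in\mathcal{B}[0,n]$ according to whether some edge of $\Gamma$ is incident to $0$. Graphs with no edge at $0$ are precisely the elements of $\mathcal{B}[1,n]$ and contribute the first term. For the remaining $\Gamma$, set
$$k(\Gamma)\;\overset{\mathrm{def}}{=}\;\max\bigl\{k\ge 1:\Gamma\text{ contains a connected spanning subgraph on }[0,k]\bigr\},$$
which is well defined since any edge $0t\in\Gamma$ already forms a connected graph on $[0,t]$. The key combinatorial observations are: (i) no edge $st\in\Gamma$ satisfies $s<k(\Gamma)<t$, because adjoining such an $st$ to a connected spanning subgraph of $\Gamma$ on $[0,k(\Gamma)]$ would yield one on $[0,t]$, contradicting maximality; (ii) hence $\Gamma$ decomposes disjointly as $\Gamma_1\sqcup\Gamma_2$ with $\Gamma_1\in\mathcal{G}[0,k(\Gamma)]$ (it contains the spanning connected subgraph realizing the maximum and all its edges lie in $[0,k(\Gamma)]$) and $\Gamma_2\in\mathcal{B}[k(\Gamma),n]$; (iii) conversely, for any $k\in\{1,\dots,n\}$, $\Gamma_1\in\mathcal{G}[0,k]$ and $\Gamma_2\in\mathcal{B}[k,n]$, the no-straddling property forces $k(\Gamma_1\cup\Gamma_2)=k$. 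This gives a weight-preserving bijection, and since $\prod_{st\in\Gamma}(-\lambda U_{st}^\rho)$ factors across $\Gamma_1$ and $\Gamma_2$, summation yields the identity.

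For the integration, the first term is handled as follows: $K_{\lambda,\rho}[1,n]$ depends only on the differences $x_i-x_j$ with $i,j\ge 1$, so the substitution $y_i=x_i-x_1$ converts the inner integral over $x_2,\dots,x_{n-1}$ into $C_{n-1}^{\mathrm{SAW}}(x_n-x_1)$, and the $x_1$ integration against $\phi(x_1)$ yields $(\phi\ast C_{n-1}^{\mathrm{SAW}})(x_n)$. For each $k$, $J[0,k]$ depends only on $x_0,\dots,x_k$ and $K_{\lambda,\rho}[k,n]$ only on $x_k,\dots,x_n$, so the integral factorizes through $x_k$: the $x_1,\dots,x_{k-1}$ integration produces $\Pi_k(x_k)$ by \eqref{Pimdef}, the $x_{k+1},\dots,x_{n-1}$ integration produces $C_{n-k}^{\mathrm{SAW}}(x_n-x_k)$ by the same translation invariance (with $C_0^{\mathrm{SAW}}=\delta_0$ covering $k=n$), and the remaining $x_k$ integration gives $(\Pi_k\ast C_{n-k}^{\mathrm{SAW}})(x_n)$. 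The one non-routine ingredient is the combinatorial identity in the second paragraph, specifically checking that the map $\Gamma\mapsto(k(\Gamma),\Gamma_1,\Gamma_2)$ is a well-defined weight-preserving bijection onto the claimed index set; everything that follows is routine bookkeeping with the product structure of $\Phi$ and translation invariance of the Gaussian increments.
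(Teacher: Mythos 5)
Your proof is correct and follows essentially the same approach as the paper: first establish the pointwise identity $K[0,n]=K[1,n]+\sum_{k=1}^{n}J[0,k]K[k,n]$ by splitting graphs according to whether $0$ is covered and extracting the maximal connected piece at $0$, then multiply by $\Phi[0,n]$ and integrate. Your $k(\Gamma)$ agrees with the paper's $m[\Gamma]$ (both equal the smallest non-straddled point $\geq 1$), and you merely spell out the weight-preserving bijection and the factorization of the integral in more detail than the paper, which compresses these steps into ``resummation over graphs on $[m,n]$ gives.''
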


\begin{proof}
It suffices to show that for each path $\mathbf{x}$ we have (suppressing
$\mathbf{x}$ in the formulas):
\begin{align}
\label{recurs2}K[0,n] = K[1,n] + \sum_{m=1}^{n} J[0,m] \, K[m,n].
\end{align}
Then \eqref{recursion} is obtained after insertion of \eqref{recurs2} into
\eqref{Def_C_SAW} followed by factorization of the integral over $\mathbf{x}$.
To prove \eqref{recurs2}, we note from \eqref{Kab} that the contribution to
$K[0,n]$ from all graphs $\Gamma$ for which $0$ is not in an edge is exactly
$K[1,n]$. To resum the contribution from the remaining graphs, we proceed as
follows. When $\Gamma$ does contain an edge ending at $0$, we let $m[\Gamma]$
denote the largest value of $m$ such that the set of edges in $\Gamma$ with at
least one end in the interval $[0,m]$ forms a connected graph on $[0,m]$. Then
resummation over graphs on $[m,n]$ gives%

\begin{align}
K[0,n] = K[1,n] + \sum_{m=1}^{n} \sum_{\Gamma\in\mathcal{G}[0,m]} \prod
_{st\in\Gamma}(-\lambda U_{st})\, K[m,n].
\end{align}
With \eqref{JDef} this proves \eqref{recurs2}.
\end{proof}

We next rewrite \eqref{Pimdef} in a form that can be used to obtain good
bounds on $\Pi_{n}(x)$. First, splitting the sum over $\Gamma\in
\mathcal{G}\left[  a,b\right]  $ according to the number of bonds in
$\ell_{\Gamma},$ we get%
\[
J[a,b]=\sum\nolimits_{N\geq1}J_{N}\left[  a,b\right]  ,
\]%
\begin{align}
&  J_{N}\left[  a,b\right]  \overset{\mathrm{def}}{=}\sum_{\ell\in
\mathcal{L}^{\left(  N\right)  }\left[  a,b\right]  }\sum_{\Gamma:\ell
_{\Gamma}=\ell}\prod_{st\in\ell}\left(  -\lambda U_{st}\right)  \prod
_{s^{\prime}t^{\prime}\in\Gamma\backslash\ell}\left(  -\lambda U_{s^{\prime
}t^{\prime}}\right) \nonumber\\
&  =\left(  -\lambda\right)  ^{N}\sum_{\ell\in\mathcal{L}^{\left(  N\right)
}\left[  a,b\right]  }\prod_{st\in\ell}U_{st}\prod_{s^{\prime}t^{\prime}%
\in\mathcal{C}\left(  \ell\right)  }\left(  1-\lambda U_{s^{\prime}t^{\prime}%
}\right) \label{Def_JN}\\
&  =\left(  -\lambda\right)  ^{N}J^{\left(  N\right)  }\left[  a,b\right]
,\ \mathrm{say}.\nonumber
\end{align}
Implementing into \eqref{Pimdef}, we get a splitting%
\[
\Pi_{n}=\sum\nolimits_{N\geq1}\left(  -\lambda\right)  ^{N}\Pi_{n}^{\left(
N\right)  },
\]
where $\Pi_{n}^{\left(  N\right)  }$ is obtained by replacing $J\left[
0,n\right]  $ in (\ref{Pimdef}) by $J^{\left(  N\right)  }\left[  0,n\right]
.$ Note that the sum over $N$ is restricted to $N<n.$

An important point is that we obtain an upper bound for $\Pi_{n}^{\left(
N\right)  }$ by dropping in (\ref{Def_JN}) the factors $\left(  1-\lambda
U_{s^{\prime}t^{\prime}}\right)  $ for all $s^{\prime}t^{\prime}$ which cross
an endpoint of any $st$ bond of the lace $\ell$. This gives the upper bound%
\begin{equation}
\Pi_{n}^{\left(  N\right)  }\left(  x\right)  \leq\sum_{\ell\in\mathcal{L}%
^{\left(  N\right)  }\left[  0,n\right]  }\Xi_{\ell}\left(  C,\rho\right)
\left(  x\right)  \label{Monotonicity}%
\end{equation}
for $N\geq2,$ where $C=\left\{  C_{n}\right\}  .$ For $N=1$, there is the
slight modification from \textquotedblleft restoring\textquotedblright\ the
$0n$ bond: $\Pi_{n}^{\left(  1\right)  }\left(  x\right)  =\Xi_{0n}\left(
C,\rho\right)  \left(  x\right)  /\left(  1-\lambda\right)  .$

\subsection{Bounds on the lace function\label{Subsect_Lacebound}}

We need below a slight generalization of the notion in (\ref{Def_Xi}). Given
$G_{t}$, defined for real $t>0,$ we define for an additional sequence
$\mathbf{t}=\left(  t_{1},\ldots,t_{2N-1}\right)  ,\ \Xi_{\ell}\left(
G,\rho,\mathbf{t}\right)  \left(  x\right)  $ by replacing $m_{i}$ on the
right hand side of (\ref{Def_Xi}) by $m_{i}+t_{i}.$ Also, given an arbitrary
sequence $\mathbf{r}=\left(  r_{1},\ldots,r_{2N-1}\right)  $ of elements in
$\mathbb{N}_{0},$ we write%
\[
\xi_{n}^{\left(  N\right)  }\left(  G,\rho,\mathbf{t},\mathbf{r}\right)
\left(  x\right)  \overset{\mathrm{def}}{=}\sum_{\mathbf{m}\in\mathcal{L}%
^{\left(  N\right)  }\left[  0,n\right]  ,\ m_{i}\geq r_{i}}\Xi_{\ell}\left(
G,\rho,\mathbf{t}\right)  \left(  x\right)  .
\]
Of course, finally we are interested only in the case where the $r_{i}$ are
the \textquotedblleft natural\textquotedblright\ ones from the restriction of
the laces, i.e. $r_{1}=r_{2}=1,\ r_{3}=0$ (if $N\geq3$) etc. We write
$\mathbf{r}^{\left(  0\right)  }$ for this starting sequence. If $\mathbf{t}$
is the sequence of $0$'s, and $\mathbf{r}=\mathbf{r}^{\left(  0\right)  },$ we
drop these arguments in the notation. We will need the more general ones in an
induction argument.

We first state a simple lemma regarding normal densities.

\begin{lemma}
If $u,v,s,t>0,\ x,y\in\mathbb{R}^{d},$ then%
\begin{equation}
\int\phi_{u}\left(  z\right)  \phi_{\nu}\left(  x-z\right)  \phi_{s}\left(
z\right)  \phi_{t}\left(  y-z\right)  dz\leq L\left[  \frac{u+v}{uv}\right]
^{d/4}\left[  \frac{s+t}{st}\right]  ^{d/4}\phi_{u+v}\left(  x\right)
\phi_{s+t}\left(  y\right)  . \label{Convol}%
\end{equation}

\end{lemma}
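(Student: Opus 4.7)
The plan is to reduce the four-Gaussian integral to a single evaluation of a convolution of two normal densities by using the standard completing-the-square identity
\[
\phi_{u}(z)\,\phi_{v}(x-z)=\phi_{u+v}(x)\,\phi_{uv/(u+v)}\!\left(z-\tfrac{u}{u+v}x\right),
\]
applied to the pair $(\phi_u,\phi_v(x-\cdot))$ and also to the pair $(\phi_s,\phi_t(y-\cdot))$. After pulling the two $x,y$--factors $\phi_{u+v}(x)\phi_{s+t}(y)$ outside the integral, what remains under the integral is the product of two Gaussian densities in $z$ with variances $\alpha=uv/(u+v)$ and $\beta=st/(s+t)$ centred respectively at $a=ux/(u+v)$ and $b=sy/(s+t)$.

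Next I would recognize that integral as a convolution: substituting $w=z-a$ and using the symmetry $\phi_\beta(-\cdot)=\phi_\beta(\cdot)$ gives
\[
\int \phi_\alpha(z-a)\,\phi_\beta(z-b)\,dz=(\phi_\alpha\ast\phi_\beta)(a-b)=\phi_{\alpha+\beta}(a-b).
\]
The crude bound $\phi_{\alpha+\beta}(a-b)\leq \phi_{\alpha+\beta}(0)=(2\pi(\alpha+\beta))^{-d/2}$ is what loses information about $x,y$, but this is precisely what the statement allows.

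To finish, I would apply AM--GM in the form $\alpha+\beta\ge 2\sqrt{\alpha\beta}$, which yields
\[
(\alpha+\beta)^{-d/2}\le 2^{-d/2}\,\alpha^{-d/4}\,\beta^{-d/4}
=2^{-d/2}\!\left[\frac{u+v}{uv}\right]^{d/4}\!\left[\frac{s+t}{st}\right]^{d/4}.
\]
Combining this with the factored-out $\phi_{u+v}(x)\phi_{s+t}(y)$ gives the claimed inequality with $L=2^{-d/2}(2\pi)^{-d/2}$.

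No step here is a real obstacle; the only thing that needs a bit of care is keeping the normalization constants straight when applying the product-of-Gaussians identity, since the constants on both sides must actually match (a direct check in the exponent confirms this). Everything else is one substitution, one convolution identity, and one application of AM--GM.
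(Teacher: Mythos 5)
Your proof is correct, and it takes a genuinely different route from the paper. The paper bounds the integral by Cauchy--Schwarz, separating the $(u,v)$ pair from the $(s,t)$ pair,
\[
\int\phi_{u}\phi_{v}(x-\cdot)\phi_{s}\phi_{t}(y-\cdot)
\leq\Bigl(\textstyle\int\phi_{u}^{2}\phi_{v}^{2}(x-\cdot)\Bigr)^{1/2}\Bigl(\textstyle\int\phi_{s}^{2}\phi_{t}^{2}(y-\cdot)\Bigr)^{1/2},
\]
and then evaluates each factor in closed form using $\phi_{u}^{2}=2^{-d}(\pi u)^{-d/2}\phi_{u/2}$; this yields the right-hand side with $L=2^{-d}\pi^{-d/2}$ exactly. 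You instead apply the product-of-Gaussians (conditioning) identity to pull out $\phi_{u+v}(x)\phi_{s+t}(y)$, reduce the remaining $z$-integral to $\phi_{\alpha+\beta}(a-b)$, bound it by its value at $0$, and finish with AM--GM. It is no coincidence that you land on the same constant $2^{-d}\pi^{-d/2}$: Cauchy--Schwarz and your AM--GM step are both saturated exactly when $uv/(u+v)=st/(s+t)$ and $a=b$, so the two bounds are of identical strength. Your version has the mild advantage of making transparent what is being discarded (the factor $\exp(-|a-b|^{2}/2(\alpha+\beta))$ and the AM--GM defect), while the paper's is the more compact one-liner once one accepts the ``elementary computation.'' Both are fully correct.
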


\begin{proof}
By Cauchy-Schwarz the left hand side is%
\[
\leq\sqrt{\int\phi_{u}^{2}\left(  z\right)  \phi_{\nu}^{2}\left(  x-z\right)
dz}\sqrt{\int\phi_{s}^{2}\left(  z\right)  \phi_{t}^{2}\left(  y-z\right)
dz},
\]
which equals the rhs of (\ref{Convol}) by an elementary computation.
\end{proof}

Let us fix some more notation. We saw that an $N$-lace is nothing but a
sequence $\mathbf{m}=\left(  m_{1},\ldots,m_{2N-1}\right)  $ with $\sum
_{i}m_{i}=n,$ and satisfying some restrictions, like $m_{1}\geq1,\ m_{2}%
\geq1,\ m_{3}\geq0$ . We write $\mathbf{r}^{\left(  0\right)  }=\left(
1,1,0,1,0,\ldots\right)  $ for this sequence of restrictions. For an arbitrary
sequence $\mathbf{r}\in\mathbb{N}_{0}^{2N-1}$ with $\sum_{i}r_{i}\leq n,$ we
write $\mathcal{L}_{\mathbf{r}}^{\left(  N\right)  }\left[  0,n\right]  $ for
the set of $\mathbf{m}$ satisfying $m_{i}\geq r_{i},\ \forall i,$ and
$\sum_{i}m_{i}=n.$ The $r_{i}$ need not satisfy $r_{i}\geq r_{i}^{\left(
0\right)  }.$

\begin{lemma}
\label{Le_reduce}For $\nu>0$, $\mathbf{m}\in\mathbb{N}_{0}^{2N-1},\ t_{i}%
\geq0,$ $\mathbf{x}=\left(  x_{1},\ldots,x_{N-1}\right)  \in\left(
\mathbb{R}^{d}\right)  ^{N-1},$ let%
\[
\Phi_{N,\mathbf{m},\mathbf{t}}^{\left(  \nu\right)  }\left(  \mathbf{x}%
\right)  \overset{\mathrm{def}}{=}\prod_{i=1}^{2N-1}\phi_{\nu m_{i}+t_{i}%
}\left(  x_{\beta\left(  i\right)  -1}-x_{\beta\left(  i-1\right)  -1}\right)
,
\]
with $x_{0}=0.$ If for any $i$ either $r_{i}\geq1$ or $t_{i}\geq c,$ then, for
$d\geq5$ and $\ N\geq3$,%
\[
\sum_{\mathbf{m}\in\mathcal{L}_{\mathbf{r}}^{\left(  N\right)  }\left[
0,n\right]  }\int dx_{1}\Phi_{N,\mathbf{m},\mathbf{t}}^{\left(  \nu\right)
}\left(  \mathbf{x}\right)  \leq L\left(  c\right)  \sum_{\mathbf{m}^{\prime
}\in\mathcal{L}_{\mathbf{r}^{\prime}}^{\left(  N-1\right)  }\left[
0,n\right]  }\Phi_{N-1,\mathbf{m}^{\prime},\mathbf{t}^{\prime}}^{\left(
\nu\right)  }\left(  x_{2},\ldots,x_{N-1}\right)  ,
\]
where $\mathbf{r}^{\prime}\overset{\mathrm{def}}{=}\left(  r_{3},r_{1}%
+r_{4},r_{2}+r_{5},r_{6},\ldots,r_{2N-1}\right)  ,\ \mathbf{t}^{\prime
}\overset{\mathrm{def}}{=}\left(  t_{3},t_{1}+t_{4},t_{2}+t_{5},t_{6}%
,\ldots,t_{2N-1}\right)  $ which both have $2N-3$ components.
\end{lemma}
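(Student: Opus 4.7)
The plan is to integrate out $x_1$ by applying the Cauchy--Schwarz bound (\ref{Convol}), thereby collapsing the second bond of the basic $N$-lace $\ell_N^0$ and reducing to an $(N-1)$-lace. The only real obstacle is controlling the prefactor produced by (\ref{Convol}) after summation over $\mathbf{m}$; this is where $d\geq 5$ and the non-degeneracy assumption ``$r_i\geq1$ or $t_i\geq c$'' are used.

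First I would locate the factors of $\Phi_{N,\mathbf{m},\mathbf{t}}^{(\nu)}$ that depend on $x_1$. Since $x_1=x_{\beta(i)-1}$ requires $\beta(i)=2$ and the second bond of $\ell_N^0$ has endpoints $\underline{2}=1,\overline{2}=4$, the variable $x_1$ appears in exactly the factors $i\in\{1,2,4,5\}$. Using the rotational symmetry of $\phi_r$, these four factors read
\[
\phi_{\nu m_1+t_1}(x_1)\,\phi_{\nu m_4+t_4}(x_2-x_1)\,\phi_{\nu m_2+t_2}(x_1)\,\phi_{\nu m_5+t_5}(x_3-x_1),
\]
with the convention $x_3:=x_2$ when $N=3$. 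Applying (\ref{Convol}) with $z=x_1$, $(u,v)=(\nu m_1+t_1,\nu m_4+t_4)$, $(s,t)=(\nu m_2+t_2,\nu m_5+t_5)$, $x=x_2$, $y=x_3$ bounds the $x_1$-integral by
\[
L\Bigl(\tfrac{u+v}{uv}\Bigr)^{d/4}\Bigl(\tfrac{s+t}{st}\Bigr)^{d/4}\phi_{\nu(m_1+m_4)+t_1+t_4}(x_2)\,\phi_{\nu(m_2+m_5)+t_2+t_5}(x_3).
\]
Multiplying by the remaining factors $i\in\{3,6,7,\ldots,2N-1\}$ and relabeling $x_{k+1}\mapsto x'_k$ for $k\geq 1$, a direct bookkeeping check identifies the resulting product with $\Phi_{N-1,\mathbf{m}',\mathbf{t}'}^{(\nu)}(x_2,\ldots,x_{N-1})$ for the $\mathbf{m}'$ and $\mathbf{t}'$ in the statement: deleting bond~$2$ from $\ell_N^0$ and relabeling produces the basic $(N-1)$-lace whose interdistances are $(m_3,m_1+m_4,m_2+m_5,m_6,\ldots,m_{2N-1})$, consistent with the prescribed $\mathbf{r}'$.

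Reorganizing the outer sum as $\sum_{\mathbf{m}'}\sum_{m_1+m_4=m'_2}\sum_{m_2+m_5=m'_3}$ and pulling out $\Phi_{N-1,\mathbf{m}',\mathbf{t}'}^{(\nu)}$, the proof reduces to establishing
\[
\sum_{\substack{m_1+m_4=m'_2\\ m_1\geq r_1,\,m_4\geq r_4}}\Bigl(\tfrac{u+v}{uv}\Bigr)^{d/4}\leq L(c)
\]
uniformly in $m'_2$, together with the analogous bound for the $(m_2,m_5)$-sum. Writing $(u+v)/(uv)=u^{-1}+v^{-1}$ and using the elementary inequality $(a+b)^{d/4}\leq L(a^{d/4}+b^{d/4})$, the sum is controlled by $\sum_{m_1\geq r_1}(\nu m_1+t_1)^{-d/4}+\sum_{m_4\geq r_4}(\nu m_4+t_4)^{-d/4}$. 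By hypothesis, for each $i\in\{1,4\}$ either $r_i\geq 1$, giving summands $\leq(\nu m_i)^{-d/4}$, or $t_i\geq c$, giving summands $\leq(\nu m_i+c)^{-d/4}$; in either case the series converges because $d\geq 5$ implies $d/4>1$. This summability is the only non-trivial ingredient, and it is precisely what breaks at $d=4$; the rest of the argument is lace bookkeeping.
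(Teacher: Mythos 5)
Your argument is correct and is essentially the paper's own: integrate out $x_1$ using the Cauchy--Schwarz bound \eqref{Convol}, reorganize the sum over $\mathbf{m}$ by fixing $m_1+m_4=m_2'$ and $m_2+m_5=m_3'$, and bound the resulting prefactor sums using $d/4>1$ together with the hypothesis that each $i$ has $r_i\geq 1$ or $t_i\geq c$. The only difference is cosmetic — you spell out the convergence of the restricted sums explicitly, whereas the paper simply asserts the $L(c)$ bound.
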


\begin{proof}
The part of $\Phi_{N,\mathbf{m},\mathbf{t}}^{\left(  \nu\right)  }\left(
\mathbf{x}\right)  $ which contains $x_{1}$ is%
\[
\phi_{m_{1}\nu+t_{1}}\left(  x_{1}\right)  \phi_{m_{2}\nu+t_{2}}\left(
x_{1}\right)  \phi_{m_{4}\nu+t_{4}}\left(  x_{2}-x_{1}\right)  \phi_{m_{5}%
\nu+t_{5}}\left(  x_{3}-x_{1}\right)  .
\]
In case $N=3,$ we have $x_{3}=x_{2}.$ Using the previous lemma for the
integration over $x_{1}$, and summing over $m_{1},m_{2},m_{4},m_{5},$ keeping
$m_{1}+m_{4}=m_{2}^{\prime},\ m_{2}+m_{5}=m_{3}^{\prime}$ fixed, we get for
the $x_{1}$-integration and this restricted summation of the above expression
a bound%
\[
\leq L\left(  c\right)  \phi_{m_{2}^{\prime}\nu+t_{1}+t_{4}}\left(
x_{2}\right)  \phi_{m_{3}^{\prime}\nu+t_{2}+t_{5}}\left(  x_{3}\right)  .
\]
We write $\mathbf{m}^{\prime}\in\mathbb{N}_{0}^{2N-3}$ with $m_{1}^{\prime
}=m_{3}, m_{2}^{\prime}=m_{1}+m_{4}, m_{3}^{\prime}=m_{2}+m_{5}$, and
$m_{i}^{\prime}=m_{i+2}$ otherwise. The restrictions on the $m_{i}^{\prime}$
are evidently given by $m_{i}^{\prime}\geq r_{i}^{\prime}.$ Summing over
$\mathbf{m}^{\prime}$ gives the desired bound.
\end{proof}

Here is the illustration of the \textquotedblleft collapsing mechanism":

\medskip

\begin{figure}[h]
\centering
\includegraphics[width=1.0\linewidth,clip]{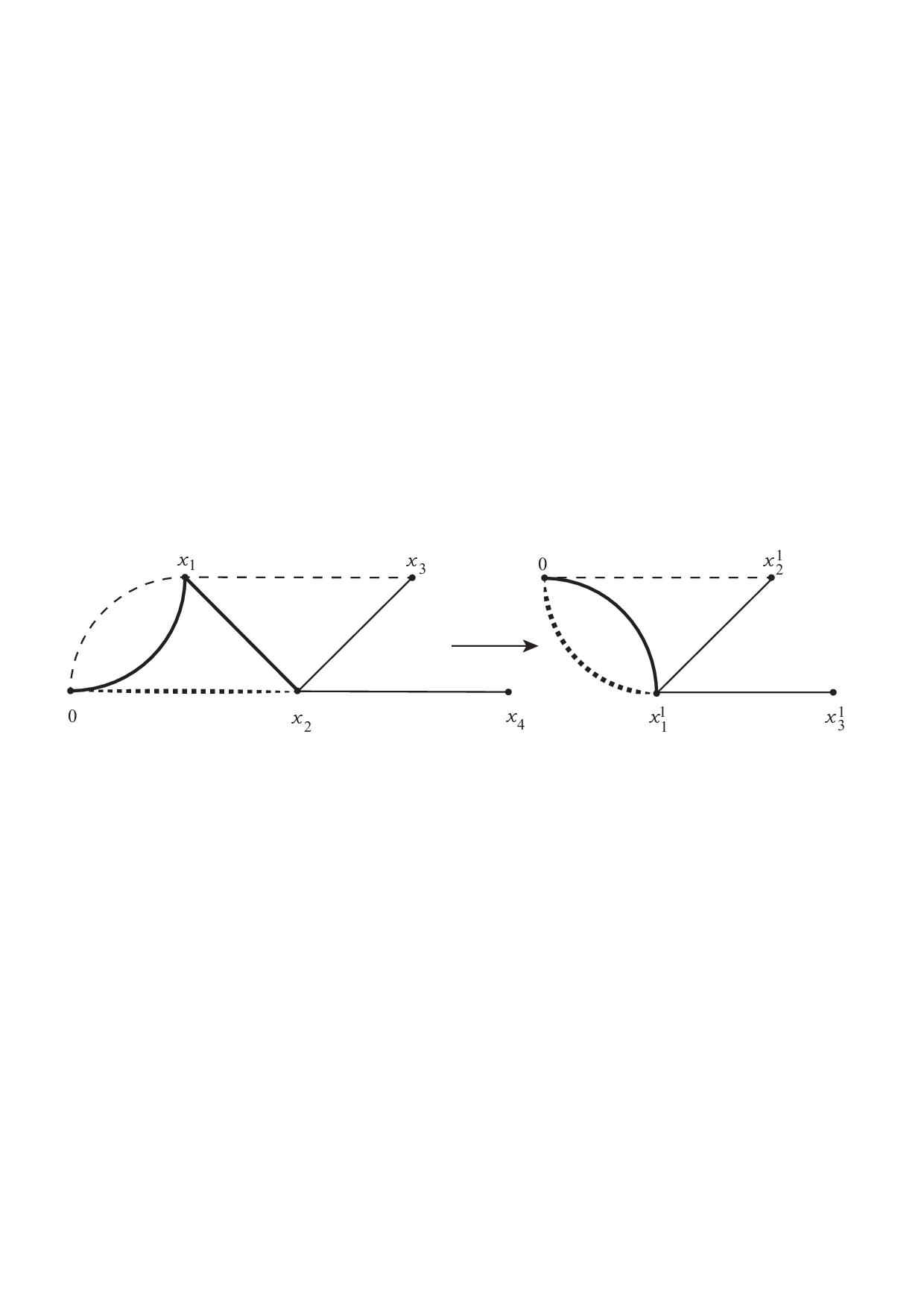}
\end{figure}

\medskip

\begin{lemma}
\label{Le_main_induction}Assume $d\geq5$. If for some $\nu\in\left[  \frac
{19}{20},\frac{21}{20}\right]  $ and $m\in\mathbb{N},$ $m\geq3$, one has%
\begin{equation}
G_{n}\left(  x\right)  \leq\phi_{n\nu}\left(  x\right)  , \label{Assumption}%
\end{equation}
for all $n<m,$ then for $N\geq2,\ 0<\rho\leq1$, we have with $L=L\left(
d\right)  $, not depending on $m,N$%
\[
\xi_{m}^{\left(  N\right)  }\left(  G,\rho\right)  \leq L^{N}\rho^{Nd}%
\Gamma_{m},
\]
where $\Gamma_{m}$ is defined in (\ref{Def_Gamma_SAW}).
\end{lemma}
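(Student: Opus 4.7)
The plan uses three ingredients: the hypothesis $G_n \le \phi_{n\nu}$; the indicator-absorption estimate $\int_{|y|\le\rho} \phi_t(y-z)\,dy \le L(\eta)\rho^d\phi_{t(1+\eta)}(z)$, valid for fixed $\eta > 0$ and $t \ge \rho^2$, uniformly in $z$; and the reduction Lemma~\ref{Le_reduce}. Applying the first ingredient replaces each $G_{m_i}$ in $\Xi_\ell$ by $\phi_{\nu m_i}$, and applying the second to each of the $N$ indicators extracts the overall factor $L^N\rho^{Nd}$ while leaving a pure Gaussian integral of the form $\int \Phi_{N,\mathbf{m},\mathbf{t}}^{(\nu)}(\mathbf{x})\,dx_1\cdots dx_{N-1}$. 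The resulting variance shifts $t_i = \nu m_i\cdot\eta$, deposited at edges adjacent to each absorbed indicator, satisfy $t_i \ge 19\eta/20 =: c$, a positive constant independent of $\rho$ (this is the reason for using the $(1+\eta)$-dilation trick rather than a naive collapse, which would only supply $t_i \sim \rho^2$).

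I then iterate Lemma~\ref{Le_reduce} exactly $N-2$ times to collapse the $N$-lace down to a $2$-lace, paying an absolute constant per step. The hypothesis ``$r_i \ge 1$ or $t_i \ge c$'' of the lemma must be verified at every iteration. Initially, $\mathbf{r}^{(0)}$ has $r_i \ge 1$ at all positions except the interior odd indices $3,5,\ldots,2N-3$, and those positions are precisely where the indicator-induced shifts $t_i \ge c$ are placed. The reduction recursions $\mathbf{r}' = (r_3, r_1+r_4, r_2+r_5, r_6, \ldots)$ and $\mathbf{t}' = (t_3, t_1+t_4, t_2+t_5, t_6, \ldots)$ propagate these properties through the induction, provided the shifts are distributed among the interior odd edges in a consistent way. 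When the induction terminates at $N = 2$ (Lemma~\ref{Le_reduce} requires $N\ge 3$), the remaining two-lace Gaussian integral is handled directly via the Cauchy--Schwarz estimate~(\ref{Convol}) preceding the lemma.

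The final step is to sum the pointwise bound over lace compositions $\mathbf{m} \in \mathcal{L}^{(N)}[0,m]$ with $\sum_i m_i = m$. Each composition contributes a product of $(\nu m_i)^{-d/2}$ prefactors together with a Gaussian in $x$ whose variance is of order the smallest $m_i$ (the product of several Gaussians at a common point has variance equal to the harmonic mean, dominated by the smallest argument). Organising the sum by the length $k$ of the shortest edge yields an estimate of the form $L^N \rho^{Nd}\, m^{-d/2}\sum_{k=1}^{m} k^{1-d/2}\phi_{ck}(x)$ for some $c$ depending only on $d,\nu,\eta$, matching the shape of $\Gamma_m$ in~(\ref{Def_Gamma_SAW}) provided $c \le 2/5$ (arranged by taking $\eta$ small enough).

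The main obstacle will be the bookkeeping in the last two steps: namely, placing the indicator-induced shifts on the correct edges so that the hypothesis of Lemma~\ref{Le_reduce} is satisfied at every iteration (the relabelling $(r_3,r_1+r_4,\ldots)$ intermixes restrictions and shifts in a nonobvious way), and verifying that the summation over lace compositions actually produces the fine $k^{1-d/2}\phi_{ck}(x)$ structure of $\Gamma_m$ rather than a looser bound. A secondary difficulty is controlling the accumulated variance dilation $(1+\eta)^{O(N)}$ so that the final Gaussian in $x$ remains narrower than $\phi_{2k/5}$.
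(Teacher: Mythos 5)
Your three-ingredient outline is the same skeleton the paper uses: replace $G$ by Gaussians, absorb the $N$ indicators to extract $\rho^{Nd}$ at the cost of a small dilation, iterate Lemma~\ref{Le_reduce} down to $N=2$, and close with a direct bound on the two-lace sum. The paper likewise tracks exactly the pair $(\mathbf{r}^{(k)},\mathbf{t}^{(k)})$ under the recursion $(r_3,r_1+r_4,\dots)$ that you write down, so the bookkeeping you flag as ``the main obstacle'' is indeed what the paper does, and it closes because $\hat{\mathbf{r}}^{(k+3)}=\hat{\mathbf{r}}^{(k)}+(1,1,1)$, $\hat{\boldsymbol{\tau}}^{(k+3)}=\hat{\boldsymbol{\tau}}^{(k)}+(1,1,1)$, so $2\hat r_i\geq\hat\tau_i$ holds from $N>3$ onward (and $N=3$ is checked by hand).

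There are, however, two concrete gaps in the details as you have set them up. First, your shift mechanism is internally inconsistent: you deposit shifts of size $t_i=\nu m_i\eta$ and claim they are placed ``precisely'' at the interior odd indices $3,5,\ldots,2N-3$, but those are exactly the edges where $m_i$ can be $0$, and then $\nu m_i\eta=0$, so the hypothesis ``$r_i\geq 1$ or $t_i\geq c$'' of Lemma~\ref{Le_reduce} fails. The paper avoids this by giving those edges a \emph{constant} additive shift ($t_i^{(0)}=1/200$) and paying for it with the dilation $\nu\mapsto\nu'=20\nu/19$ on the adjacent edges, which always have $m\geq 1$; the dilation alone is not the source of the needed shift. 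Second, your final matching condition ``provided $c\leq 2/5$, arranged by taking $\eta$ small'' cannot be arranged: every Gaussian in the reduction carries variance parameter at least $\nu\geq 19/20>2/5$, so the pointwise domination $\phi_{ck}\leq L\phi_{2k/5}$ you implicitly need is false. The paper instead lands on a sum of the form $m^{-d/2}\sum_{k\leq m/3}k^{1-d/2}\phi_{k\nu''}$ with $\nu''<6/5$ and then \emph{reindexes} $k\mapsto k'$ so that $k\nu''\leq 2k'/5$ with $k'\leq m$; the constants $(k'/k)^{1-d/2}$ are absorbed into $L$. Without this reindexing step your argument stalls at the last line. A small further slip: for $N=2$ there is no $x_1$-integration left after collapsing, so the Cauchy--Schwarz estimate~\eqref{Convol} is not what handles the base case; the base case is the direct computation of $\sum_{k+l+j=m}\phi_{k\nu'}\phi_{l\nu'}\phi_{j\nu'}$ as a pointwise product of three Gaussians at the same argument, while \eqref{Convol} is used \emph{inside} Lemma~\ref{Le_reduce} during each collapse.
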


\begin{proof}
We choose $\nu^{\prime}\overset{\mathrm{def}}{=}20\nu/19$. Remark that
$\nu^{\prime\prime}\overset{\mathrm{def}}{=}\nu^{\prime}+1/100<6/5$, and
therefore $2\nu^{\prime\prime}/3<4/5.$

The assumption (\ref{Assumption}) implies%
\begin{equation}
\xi_{n}^{\left(  N\right)  }\left(  G,\rho\right)  \leq\xi_{n}^{\left(
N\right)  }\left(  \phi^{\left(  \nu\right)  },\rho\right)  ,
\label{lacebound1}%
\end{equation}
where $\phi^{\left(  \nu\right)  }=\left\{  \phi_{\nu t}\right\}  .$

We first want to get rid of the $\mathbb{I}_{\rho}$. In $\Xi_{\ell}\left(
\phi^{\left(  \nu\right)  },\rho\right)  \left(  x\right)  ,$ if all the
$m_{i}$ are $\geq1,$ we can simply use $\phi_{m\nu}\left(  x\right)  \leq
L\phi_{m\nu^{\prime}}\left(  x^{\prime}\right)  $ for $\left\vert x-x^{\prime
}\right\vert \leq\rho\leq1$ from which we easily get%
\[
\Xi_{\ell}\left(  \phi^{\left(  \nu\right)  },\rho\right)  \left(  x\right)
\leq L^{N}\rho^{Nd}\Xi_{\ell}\left(  \phi^{\left(  \nu^{\prime}\right)
},0\right)  \left(  x\right)  .
\]
There is however a complication due to the possibility of having $m_{i}=0$ in
the summation. Such $i$ have to be odd, and the possibility is not present for
$m_{1}$ and $m_{2N-1}.$ Using the fact that if $m_{i}=0$ then $m_{i-1}%
,m_{i+1}\geq1,$ we get%
\begin{equation}
\Xi_{\ell}\left(  \phi^{\left(  \nu\right)  },\rho\right)  \left(  x\right)
\leq L^{N}\rho^{Nd}\Xi_{\ell}\left(  \phi^{\left(  \nu^{\prime}\right)
},0,\mathbf{t}^{\left(  0\right)  }\right)  \left(  x\right)
\label{lacebound2}%
\end{equation}
for all $\ell\in\mathcal{L}^{\left(  N\right)  }\left[  0,n\right]  $ where
$t_{i}^{\left(  0\right)  }=0$ for $i$ even and $i=1,\ 2N-1,$ and
$t_{i}^{\left(  0\right)  }=1/200$ for the other $i$ odd. Actually, the adding
of the constant $1/200$ would be necessary only if $m_{i}$ in fact equals $0,$
but there is no harm adding it always with those $i$ for which $m_{i}$
\textit{can }be $0.$ It remains to estimate%
\[
\xi_{m}^{\left(  N\right)  }\left(  \phi^{\left(  \nu^{\prime}\right)
},0,\mathbf{t}^{\left(  0\right)  }\right)  =\sum_{\mathbf{m\in}%
\mathcal{L}^{\left(  N\right)  }\left[  0,n\right]  }\int dx_{1}\cdots
dx_{N-2}\Phi_{N,\mathbf{m},\mathbf{t}^{\left(  0\right)  }}^{\left(
\nu^{\prime}\right)  }\left(  \mathbf{x}\right)
\]
with $x=x_{N-1}$.

For $N=2$, there is $t_{i}^{\left(  0\right)  }=0$ for all $i=1,2,3$ and no
integration:%
\begin{align}
\xi_{m}^{\left(  2\right)  }\left(  \phi^{\left(  \nu^{\prime}\right)
},0\right)   &  \leq6\sum_{1\leq k\leq l\leq j,\ k+l+j=m}\phi_{k\nu^{\prime}%
}\phi_{l\nu^{\prime}}\phi_{j\nu^{\prime}}\label{Three_leg}\\
&  \leq Lm^{-d/2}\sum_{k=1}^{\left\lceil m/3\right\rceil }k^{-d/2+1}\phi
_{k\nu^{\prime}}\leq Lm^{-d/2}\sum_{k=1}^{\left[  m/2\right]  }k^{-d/2+1}%
\phi_{4k/5}\leq L\Gamma_{m}.\nonumber
\end{align}

For $N\geq3$, we apply Lemma \ref{Le_reduce}. Starting with $\mathbf{r}%
^{\left(  0\right)  }$ and $\mathbf{t}^{\left(  0\right)  }$, we recursively
define $\mathbf{r}^{\left(  k+1\right)  }\overset{\mathrm{def}}{=}%
\mathbf{r}^{\left(  k\right)  \prime},\ \mathbf{t}^{\left(  k+1\right)
}\overset{\mathrm{def}}{=}\mathbf{t}^{\left(  k\right)  \prime}.$ Applying the
lemma $N-2$ times we arrive at%
\[
\xi_{m}^{\left(  N\right)  }\left(  \phi^{\left(  \nu^{\prime}\right)
},0,\mathbf{t}^{\left(  0\right)  }\right)  \left(  x\right)  \leq L^{N-2}%
\sum_{\mathbf{m}\in\mathcal{L}_{\mathbf{r}^{\left(  N-2\right)  }}^{\left(
2\right)  }\left[  0,m\right]  }\Phi_{2,\mathbf{\mathbf{m}},\mathbf{t}%
^{\left(  N-2\right)  }}^{\left(  \nu^{\prime}\right)  }\left(  x\right)  .
\]
(There is no integration left when $N=2$). The $\mathbf{\hat{\tau}}^{\left(
N\right)  }\overset{\mathrm{def}}{=}200\mathbf{t}^{\left(  N-2\right)
},\ \mathbf{\hat{r}}^{\left(  N\right)  }\overset{\mathrm{def}}{=}%
\mathbf{r}^{\left(  N-2\right)  }$ can easily be computed: $\mathbf{\hat{r}%
}^{\left(  2\right)  }=\left(  1,1,1\right)  ,\ \mathbf{\hat{r}}^{\left(
3\right)  }=\left(  0,2,2\right)  ,\ \mathbf{\hat{r}}^{\left(  4\right)
}=\left(  1,1,3\right)  ,\ \mathbf{\hat{\tau}}^{\left(  2\right)  }=\left(
0,0,0\right)  ,\ \mathbf{\hat{\tau}}^{\left(  3\right)  }=\left(
1,0,0\right)  ,\ \mathbf{\hat{\tau}}^{\left(  4\right)  }=\left(
1,1,0\right)  $, and $\mathbf{\hat{r}}^{\left(  k+3\right)  }=\mathbf{\hat{r}%
}^{\left(  k\right)  }+\left(  1,1,1\right)  $, $\mathbf{\hat{\tau}}^{\left(
k+3\right)  }=\mathbf{\hat{\tau}}^{\left(  k\right)  }+\left(  1,1,1\right)
.$ Therefore, the only case where an $\hat{r}_{i}$ can be $0$ is $N=3$. Here
one estimates by a similar expression as on the right hand side of
(\ref{Three_leg}) with the only difference that summation over $k$ starts at
$0,$ but instead of $\phi_{k\nu^{\prime}}$ one has $\phi_{k\nu^{\prime}%
+1/200}.$ However, for $k=0,$ one estimates $\phi_{1/200}\leq L\phi
_{\nu^{\prime}},$ giving an estimate like in (\ref{Three_leg}) with a
different $L$. If $N>3,$ all the $\hat{r}_{i}^{\left(  N\right)  }$ are
$\geq1$, and it is easily checked that $2\hat{r}_{i}^{\left(  N\right)  }%
\geq\hat{\tau}_{i}^{\left(  N\right)  }.$ Using that, one estimates%
\[
\phi_{k\nu^{\prime}+\hat{t}_{i}^{\left(  N-2\right)  }}\leq L\phi
_{k\nu^{\prime\prime}}%
\]
for $k\geq\hat{r}_{i}^{\left(  N-2\right)  }$, so one gets the same estimate
as in (\ref{Three_leg}) replacing $\nu^{\prime}$ by $\nu^{\prime\prime}.$ As
$\nu^{\prime\prime}<6/5,$ the argument is the same, leading to the desired estimate.
\end{proof}

\subsection{Checking condition \ref{Cond_Main} and proof of Theorem
\ref{Th_main_SAW}}

We prove that given $\varepsilon\leq1/100$, there exists $\lambda_{0}\left(
d,\varepsilon\right)  $ such that for $0<\lambda\leq\lambda_{0}\left(
d,\varepsilon\right)  $ one has $\left\vert B_{m}\right\vert \leq\Gamma_{m}$
for all $m$, where $B_{m}\overset{\mathrm{def}}{=}\Pi_{m}/\lambda c_{m},$ and
$\Gamma_{m}$ is given by (\ref{Def_Gamma_SAW}). This is proved by induction on
$m$. Below, we use the phrase \textquotedblleft for small enough $\lambda
$\textquotedblright, in the sense that \textquotedblleft small
enough\textquotedblright\ may depend on $\varepsilon$ and $d$, but on nothing else.

For $m=1$, $\Pi_{1}\left(  x\right)  =-\lambda\phi\left(  x\right)
\mathbb{I}_{\rho}\left(  x\right)  $, and as $c_{1}=1-\lambda\int_{\left\vert
x\right\vert \leq\rho}\phi\left(  x\right)  dx\geq1-\lambda,$ we have,
provided $\lambda_{0}\left(  d,\varepsilon\right)  \leq1/2,$%
\begin{equation}
\left\vert B_{1}\right\vert \leq2\mathrm{e}^{3/4}\phi_{2/5}\leq5\phi_{2/5}%
\leq\Gamma_{1}. \label{bound_B1}%
\end{equation}
So the base of the induction is proved.

Assume now that $\left\vert B_{k}\right\vert \leq\Gamma_{k}$ for $k<m$ and
define the truncated sequence $\overline{B}_{k}$ by $B_{k}$ for $k<m,$ and $0$
for $k\geq m.$ This sequence defines $\left\{  \bar{C}_{n}\right\}  $ via
(\ref{Basic_ConvolEqu}), and then $\bar{\mu}$ given by (\ref{Sequ4}), and
$\bar{A}_{n}=\bar{\mu}^{-n}\bar{C}_{n}$. Furthermore $\bar{\delta}$ is defined
by (\ref{Def_delta}). As $\left\vert \bar{\delta}-1\right\vert \leq L\lambda,$
with $L$ depending only on $d,\varepsilon$, we have%
\begin{equation}
\left\vert \bar{\delta}\left(  1+\varepsilon\right)  -1\right\vert \leq
\frac{1}{20} \label{delta_bar_bound}%
\end{equation}
if $\lambda$ is small enough. We can apply Theorem \ref{Th_main} leading to%
\begin{equation}
\left\vert \bar{A}_{n}-\bar{a}_{n}\phi_{n\bar{\delta}}\right\vert \leq
L\lambda\left[  r_{n}\phi_{n\bar{\delta}\left(  1+\varepsilon\right)
}+n^{-d/2}\sum\nolimits_{j=1}^{\left[  n/2\right]  }j\phi_{j\bar{\delta
}\left(  1+\varepsilon\right)  }\right]  . \label{Est_A_SAW}%
\end{equation}
As $\sup_{n}\left\vert \bar{a}_{n}-1\right\vert \leq L\lambda,$ we have for
small enough $\lambda$ that $\bar{a}_{n}\phi_{n\bar{\delta}}\leq\left(
3/2\right)  \left(  1+1/100\right)  ^{d/2}\phi_{n\bar{\delta}\left(
1+\varepsilon\right)  },$ and that the right hand side of (\ref{Est_A_SAW}) is
$\leq\phi_{n\bar{\delta}\left(  1+\varepsilon\right)  },$ if $\lambda$ is
small enough, so that $\bar{A}_{n}\leq K_{1}\left(  d\right)  \phi
_{n\bar{\delta}\left(  1+\varepsilon\right)  }$, where $K_{1}\left(  d\right)
\overset{\mathrm{def}}{=}1+\left(  3/2\right)  \left(  1+1/100\right)
^{d/2},$ and therefore%
\begin{equation}
\bar{C}_{n}\leq K_{1}\left(  d\right)  \bar{\mu}^{n}\phi_{n\bar{\delta}\left(
1+\varepsilon\right)  }. \label{bound_Cbar}%
\end{equation}
As $\bar{B}_{k}=B_{k}$ for $k<m,$ we have $\bar{C}_{n}=C_{n}$ for $n<m.$

With the estimate (\ref{bound_Cbar}), we can bound $\Pi_{m}.$%
\[
\Pi_{m}^{\left(  1\right)  }\left(  x\right)  =\mathbb{I}_{\rho}\left(
x\right)  \int\prod_{\substack{0\leq s<t\leq m,\\st\neq0m}}\left(  1-\lambda
U_{st}\left(  \mathbf{x}\right)  \right)  \Phi\left[  0,n\right]  \left(
\mathbf{x}\right)  \prod_{i=1}^{n-1}dx_{i}.
\]
We bound the product inside the integral from above by dropping all bonds with
$t=m$ leading to%
\begin{align*}
\Pi_{m}^{\left(  1\right)  }\left(  x\right)   &  \leq\mathbb{I}_{\rho}\left(
x\right)  \left(  \phi\ast C_{m-1}\right)  \left(  x\right) \\
&  \leq K_{1}\left(  d\right)  \mathbb{I}_{\rho}\left(  x\right)  \bar{\mu
}^{m-1}\phi_{\left(  m-1\right)  \bar{\delta}\left(  1+\varepsilon\right)
+1}\left(  x\right) \\
&  \leq K_{1}\left(  d\right)  \bar{\mu}^{m-1}\mathbb{I}_{\rho}\left(
x\right)  \phi_{\left(  m-1\right)  \bar{\delta}\left(  1+\varepsilon\right)
+1}\left(  0\right)  .
\end{align*}

As $\left(  m-1\right)  \bar{\delta}\left(  1+\varepsilon\right)  +1\geq m/2$,
by (\ref{delta_bar_bound}), $\mathbb{I}_{\rho}\left(  x\right)  \leq\left(
4\pi/5\right)  ^{d/2}\mathrm{e}^{5/4}\phi_{2/5}\left(  x\right)  $, by
$\rho\leq1$, and $\bar{\mu}\geq1/2$, by (\ref{delta_bound}), if $\lambda$ is
small enough, we get%
\[
\Pi_{m}^{\left(  1\right)  }\left(  x\right)  \leq K_{2}\left(  d\right)
\bar{\mu}^{m}m^{-d/2}\phi_{2/5}\left(  x\right)  \leq\frac{1}{4}\bar{\mu}%
^{m}\Gamma_{m}\left(  x\right)  .
\]
with $K_{2}\left(  d\right)  \overset{\mathrm{def}}{=}2\mathrm{e}^{5/4}%
K_{1}\left(  d\right)  $, the second inequality from the way $K$ is chosen in
(\ref{K_choice}).

For $\Pi_{m}^{\left(  N\right)  }$ with $N\geq2,$ we use (\ref{Monotonicity}),
(\ref{bound_Cbar}), and Lemma \ref{Le_main_induction} and obtain $\Pi
_{m}^{\left(  N\right)  }\leq K_{1}\left(  d\right)  ^{N}\bar{\mu}^{m}%
\Gamma_{m},$ and therefore,%
\begin{align*}
\left\vert \Pi_{m}\right\vert  &  \leq\left[  \frac{\lambda}{4}+\sum
\nolimits_{N=2}^{\infty}\left(  K_{1}\left(  d\right)  \lambda\right)
^{N}\right]  \bar{\mu}^{m}\Gamma_{m}\\
&  \leq\frac{\lambda}{2}\bar{\mu}^{m}\Gamma_{m},
\end{align*}
if $\lambda$ is small enough, implying%
\begin{equation}
\left\vert B_{m}\right\vert \leq\frac{\bar{\mu}^{m}}{2c_{m}}\Gamma_{m}.
\label{first_Bm}%
\end{equation}
It remains to bound $\bar{\mu}^{m}/c_{m}.$ Remark that by (\ref{Equ_sequ_c}),
$\bar{b}_{m}=0$ and $\bar{b}_{k}=b_{k}$ for $k<m,$ we get%
\begin{align*}
\bar{c}_{m}  &  =c_{m-1}+\lambda\sum_{k=1}^{m-1}c_{k}b_{k}c_{m-k}\\
&  =c_{m-1}+\lambda\sum_{k=1}^{m}c_{k}b_{k}c_{m-k}-\lambda c_{m}b_{m}%
=c_{m}\left(  1-\lambda b_{m}\right)  .
\end{align*}
However, $\left\vert \bar{\mu}^{m}/\bar{c}_{m}-1\right\vert \leq L\lambda,$
and from (\ref{first_Bm}), we have $\left\vert b_{m}\right\vert \leq
Lm^{-d/2}\bar{\mu}^{m}/c_{m}.$ Using this, we get $\left\vert b_{m}\right\vert
\leq L$, and from that $\left\vert \bar{\mu}^{m}/c_{m}-1\right\vert \leq
L\lambda,$ so we have $\bar{\mu}^{m}/c_{m}\leq2$ for $\lambda$ small enough.
This shows that%
\begin{equation}
\left\vert B_{m}\right\vert \leq\Gamma_{m}. \label{second_Bm}%
\end{equation}

\bigskip

{\small Luca Avena, Institute of Mathematics, University of Z\"{u}rich.
luca.avena@math.uzh.ch}

{\small Erwin Bolthausen, Institute of Mathematics, University of Z\"{u}rich.
eb@math.uzh.ch}

{\small Christine Ritzmann. christine.ritzmann@gmx.ch}
\end{document}